\newtheorem{theorem}{Theorem}[section]
\newtheorem{lemma}[theorem]{Lemma}
\newtheorem{problem}[theorem]{Problem}
\newtheorem{corollary}[theorem]{Corollary}
\theoremstyle{definition}
\theoremstyle{remark}
\numberwithin{equation}{section}
\begin{document}

\title[Isomorphisms of  $C(K)$ spaces]{On isomorphisms of Banach spaces\\
of continuous functions}

\author[G.\ Plebanek]{Grzegorz Plebanek}
\address{Instytut Matematyczny, Uniwersytet Wroc\l awski}
\email{grzes@math.uni.wroc.pl}

\subjclass[2010]{Primary 46B26, 46B03, 46E15.}

\thanks{The  research partially suported by  by MNiSW Grant N N201 418939 (2010--2013).}

\thanks{I would like to thank Eloi Medina Galego, Miko{\l}aj Krupski and Witold Marciszewski for the correspondence
concerning the subject.}

\begin{abstract}
We prove that if $K$ and $L$ are compact spaces and $C(K)$ and $C(L)$ are
isomorphic as Banach spaces then $K$ has a $\pi$-base consisting of open sets
$U$ such that $\overline{U}$ is a continuous image of some compact subspace of $L$.
This sheds a new light on isomorphic classes of the spaces of the form $C([0,1]^\kappa)$ and spaces $C(K)$ where $K$ is Corson compact.
\end{abstract}

\maketitle

\newcommand{\con}{\mathfrak c}
\newcommand{\eps}{\varepsilon}
\newcommand{\alg}{\mathfrak A}
\newcommand{\algb}{\mathfrak B}
\newcommand{\algc}{\mathfrak C}
\newcommand{\ma}{\mathfrak M}
\newcommand{\pa}{\mathfrak P}
\newcommand{\BB}{\protect{\mathcal B}}
\newcommand{\AAA}{\mathcal A}
\newcommand{\CC}{{\mathcal C}}
\newcommand{\FF}{{\mathcal F}}
\newcommand{\GG}{{\mathcal G}}
\newcommand{\LL}{{\mathcal L}}
\newcommand{\KK}{{\mathcal K}}
\newcommand{\UU}{{\mathcal U}}
\newcommand{\VV}{{\mathcal V}}
\newcommand{\HH}{{\mathcal H}}
\newcommand{\DD}{{\mathcal D}}
\newcommand{\RR}{\protect{\mathcal R}}
\newcommand{\ide}{\mathcal N}
\newcommand{\btu}{\bigtriangleup}
\newcommand{\hra}{\hookrightarrow}
\newcommand{\ve}{\vee}
\newcommand{\we}{\cdot}
\newcommand{\de}{\protect{\rm{\; d}}}
\newcommand{\er}{\mathbb R}
\newcommand{\qu}{\mathbb Q}
\newcommand{\supp}{{\rm supp} }
\newcommand{\card}{{\rm card} }
\newcommand{\wn}{{\rm int} }
\newcommand{\ult}{{\rm ULT}}
\newcommand{\vf}{\varphi}
\newcommand{\osc}{{\rm osc}}
\newcommand{\ol}{\overline}
\newcommand{\me}{\protect{\bf v}}
\newcommand{\ex}{\protect{\bf x}}
\newcommand{\stevo}{Todor\v{c}evi\'c} 
\newcommand{\cc}{\protect{\mathfrak C}}
\newcommand{\scc}{\protect{\mathfrak C^*}}
\newcommand{\lra}{\longrightarrow}
\newcommand{\sm}{\setminus}
\newcommand{\wh}{\widehat}

\newcommand{\uhr}{\upharpoonright}
\newcommand{\dx}{\;{\rm d}}
\newcommand{\sub}{\subseteq}
\newcommand{\m}{$(M)$} 
\newcommand{\MA}{MA$(\omega_1)$} 
\newcommand{\clop}{\protect{\rm Clop} }
\section{Introduction}

Let $K$ and $L$ be compact spaces such that $C(K)$ and $C(L)$ are isomorphic Banach  spaces ($C(K)\sim C(L)$).
When the spaces are isometric the classical Banach-Stone theorem   says that $K$ and $L$ are necessarily homeomorphic,
see e.g.\ Semadeni \cite{Se71}, 7.8.4.  Amir \cite {Am65} and Cambern \cite{Ca67} proved that this also holds when the isomorphism constant is
smaller then $2$. In this paper we study what can be said on the relation between $K$ and $L$ when the isomorphism constant is
arbitrary. Some results on the relations between $K$ and $L$ were also proved by Benyamini \cite{Be81} and Jarosz \cite{Ja84}
when $C(K)$ is only assumed to be isomorphic --- with small constant --- to a subspace of $C(L)$. We also study such relations for arbitrary embeddings.

The isomorphic types of $C(K)$ for metrizable compacta $K$ are known, see Bessaga and Pe\l czy\'{n}ski \cite{BP60} for countable
$K$ and Miljutin \cite{Mi66} in the uncountable case (see also Pe\l czy\'{n}ski \cite{Pe68}, Albiac and Kalton \cite{AK06} or Semadeni \cite{Se71}).
Outside the class of metric spaces the isomorphic classification of $C(K)$ spaces 
exists only in some special cases, see  Galego \cite{Ga09}.

The following open question has been around for several years, see e.g.\  3.9 in \cite{AMN},
6.45 in Negrepontis \cite{Ne84} or Question 1 in Koszmider \cite{Ko07}.

\begin{problem}\label{in:2}
Assume $C(K)\sim C(L)$ and that $L$ is a Corson compact space. Is $K$ necessarily Corson compact? 
\end{problem}

By the classical Amir-Lindenstrauss theory, 
the analogous question has a positive answer for the class of Eberlein compacta (weakly compact subsets of Banach spaces), 
since $K$ is Eberlein compact if and only if
$C(K)$ is weakly compactly generated  (see e.g.\ Negrepontis \cite{Ne84}). 
Recall also that  the answer to \ref{in:2} is positive assuming
Martin's axiom and the negation of continuum hypothesis (MA+$\neg$CH), see Argyros et.\ al.\ \cite{AMN}.

In \cite{Pl13} we studied similar  problems for positive isomorphisms or embeddings. This paper continues this study without the 
positivity assumption.
We now describe the organization of the paper. After preliminaries in section 2 we study embeddings in section 3. The main
result is Theorem \ref{ie:3} which says that if $T$ is an embedding of $C(K)$ into $C(L)$ then for every $x\in K$ there is
$y\in L$ such that the measure $T^*\delta_y$ has a large atom at $x$. As a corollary we obtain that under CH the space
$C(2^{\omega_1})$ is not isomorphic to a subspace of $L$ when $L$ is Corson compact. This has been already known 
under MA+$\neg$CH, see \cite{AMN}; see also \cite{MP10}.

 Given a surjective isomorphism $T: C(K)\to C(L)$,  we study in section 4 
the function $L\ni y\to ||T^*\delta_y||$ --- the main result is stated as Theorem \ref{i:4}.
Then in section 5 we analyse  
the set-valued functions that assign 
to every $y\in L$ the set of large atoms of $T^*\delta_y$. This leads to the main result of the paper, Theorem \ref{rq:1},
which says that that for every
nonempty open set $U\sub K$ there are an nonempty open set $V$ with $\ol{V}\sub U$ and a compact subset $L_1$ of $L$ 
such that $\ol{V}$ is a continuous image of $L_1$. As an application we obtain a partial positive solution
to Problem \ref{in:2}, namely, we 
prove that if $C(K)\sim C(L)$, where $L$ is Corson compact and $K$ is homogeneous, then $K$ is
also Corson compact.

\section{Preliminaries}
Let $K$ be a compact space. 
The dual space $C(K)^*$ of the Banach space $C(K)$ is identified with $M(K)$ --- the
space of all signed Radon measures of finite variation; we use the symbol $M_1(K)$ to denote the unit ball of $M(K)$.
Every $\mu\in M(K)$ can be written as $\mu=\mu^+-\mu^-$ where $\mu^+$ and $\mu^-$
are mutually singular nonnegative finite Radon measure. Recall that the variation 
$|\mu|$ of $\mu$ is defined as $|\mu|=\mu^+ + \mu^-$ and the natural norm in $M(K)$
is given by the formula  $||\mu||=|\mu|(K)$.

In the sequel, the space $M(K)$ is always equipped with the $weak^*$
topology inherited from $C(K)^*$, i.e.\ the topology making all the functionals
$\mu\to \int_K g\;{\rm d}\mu$ 
continuous,  where $g\in C(K)$. Note that we usually write $\mu(g)$ for $\int_K g\de\mu$. 
For
any $x\in K$ we denote by $\delta_x\in M(K)$  the corresponding Dirac measure.

The mapping $M(K)\ni\mu\to |\mu|\in M(K)$ is not $weak^*$ continuous; nonetheless   
it has the following semicontinuity properties.
Recall that a real-valued function $\vf$ (defined on some topological space $X$) is lower semicontinuous if the set $\{x\in X: \vf(x)>r\}$ is open for every $r\in\er$.

\begin{lemma}\label{pr:1}
For every compact space $K$
\begin{itemize}
\item[(i)] the mapping $M(K)\ni \mu\to |\mu|(g)$ is lower semicontinuous
for every nonnegative function $g\in C(K)$;
\item[(ii)] the mapping $M(K)\ni \mu\to |\mu|(U)$ is lower semicontinuous for every
open set $U\sub K$.
\end{itemize}
\end{lemma}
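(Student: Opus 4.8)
The plan is to write each of the two variation functionals as a pointwise supremum of $weak^*$-continuous functions, and then to invoke the elementary fact that a pointwise supremum $\vf=\sup_{i}\vf_i$ of lower semicontinuous functions is lower semicontinuous, since $\{x:\vf(x)>r\}=\bigcup_i\{x:\vf_i(x)>r\}$ is open for every $r\in\er$.

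For (i), fix a nonnegative $g\in C(K)$; I would establish the identity
\[
|\mu|(g)=\sup\bigl\{\mu(h): h\in C(K),\ |h|\le g\ \text{on}\ K\bigr\},\qquad \mu\in M(K).
\]
The inequality ``$\ge$'' is immediate, because $|h|\le g$ gives $\mu(h)=\int h\dx\mu^+-\int h\dx\mu^-\le\int g\dx\mu^++\int g\dx\mu^-=|\mu|(g)$. For ``$\le$'' I would use the Jordan decomposition $\mu=\mu^+-\mu^-$ together with $\mu^+\perp\mu^-$: there is a Borel set $A$ with $\mu^+(A)=0=\mu^-(K\sm A)$, and by inner regularity of Radon measures, given $\eps>0$ one finds disjoint compact sets $C_+\sub K\sm A$ and $C_-\sub A$ with $\mu^+(K\sm C_+)<\eps$ and $\mu^-(K\sm C_-)<\eps$. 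Urysohn's lemma supplies $u\in C(K)$ with $-1\le u\le 1$, $u\equiv1$ on $C_+$ and $u\equiv-1$ on $C_-$, and then $h=ug$ satisfies $|h|\le g$ and, after splitting the two integrals over $C_\pm$ and its complement, $\mu(h)\ge|\mu|(g)-4\|g\|\,\eps$. Since each map $\mu\mapsto\mu(h)$ is $weak^*$-continuous, the displayed identity presents $\mu\mapsto|\mu|(g)$ as a supremum of continuous functions, hence it is lower semicontinuous.

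For (ii), fix an open set $U\sub K$. By inner regularity, $|\mu|(U)=\sup\{|\mu|(C):C\sub U\ \text{compact}\}$, and for each such $C$ one chooses (Urysohn again, via an open $V$ with $C\sub V\sub\ol{V}\sub U$) a function $g\in C(K)$ with $\chi_C\le g\le\chi_U$, so that $|\mu|(C)\le|\mu|(g)\le|\mu|(U)$. Hence
\[
|\mu|(U)=\sup\bigl\{|\mu|(g): g\in C(K),\ 0\le g\le\chi_U\bigr\},
\]
and by part (i) the right-hand side is a supremum of lower semicontinuous functions, therefore lower semicontinuous. The only genuine obstacle is the approximation step in (i) --- producing a single continuous test function $h$ with $|h|\le g$ whose $\mu$-integral nearly recovers $|\mu|(g)$, which is exactly where mutual singularity of $\mu^\pm$, inner regularity, and Urysohn's lemma enter; everything else is formal once that identity is in hand.
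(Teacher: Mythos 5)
Your proof is correct and takes essentially the same route as the paper: the paper's argument for (i) also rests on approximating $|\mu|(g)$ by $\mu(gh)$ for a continuous $|h|\le 1$ (it compresses your Jordan-decomposition/Urysohn construction into the phrase ``by regularity'' and then writes out the corresponding $weak^*$ neighbourhood explicitly instead of quoting the fact that a supremum of continuous functions is lower semicontinuous). Part (ii) is handled identically in both: $|\mu|(U)$ is the supremum of $|\mu|(g)$ over continuous $g$ with $0\le g\le 1$ vanishing outside $U$, so lower semicontinuity follows from (i).
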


For the rest of the paper we fix two compacta  $K$ and $L$  such that $C(K)$ is embeddable into $C(L)$ and, unless
stated explicitly otherwise,
we constantly use  the following notation:  We fix  a linear operator $T:C(K)\to C(L)$ such that
\[m\cdot ||g||\le ||Tg||\le ||g||,\]
for all $g\in C(K)$, where $m>0$. For every $y\in L$ we write $\nu_y=T^*\delta_y$, i.e.\
$\nu_y\in M(K)$ is defined by the formula $\nu_y(g)=Tg(y)$ for $g\in C(K)$. 
Moreover, we write $\theta(y)=||\nu_y||$; thus $\theta$ is a real-valued (lower semicontinuous) function on $L$ 

Put $E=T[C(K)]$. For every
$x\in K$  we  let $\mu_x$ be any Hahn-Banach extension of $(T^{-1})^*\delta_x$, i.e.\
$\mu_x$ is defined on $E$ by $\mu_x(Tg)=g(x)$ and then extended to a functional on $C(L)$ with the same norm.

In section 5 we consider set-functions from $L$ into
$[K]^{<\omega}$, the family of all finite subsets of $K$.
Recall that a function $\vf:L\to [K]^{<\omega}$ is said to be
upper semicontinuous if the set $\{y\in L: \vf(y)\sub U\}$ is open for every open $U\sub K$.
For any set $Y\sub L$ we write
\[\vf[Y]=\bigcup_{y\in Y} \vf(y);\]
we say that $\vf $ is surjective if $\vf[L]=K$.

A compact space $K$ is {\em Corson compact} if, for some cardinal number $\kappa$, which can be taken to be equal to the topological weight of $K$,
$K$ is homeomorphic to a subset of the $\Sigma$--product of real lines
\[\Sigma(\er^\kappa)=\{x\in \er^\kappa: |\{\alpha: x_\alpha\neq 0\}|\le\omega\}.\]
The class of Corson compacta has been intensively studied for its interesting topological properties
and various connections to functional analysis; we refer the reader to a basic paper \cite{AMN}
by Argyros, Mercourakis and Negrepontis,  and to the extensive surveys by Negrepontis \cite{Ne84} and Kalenda \cite{Ka00}. Recall that the class of Corson compacta is stable under taking closed subspaces and continuous images.

If $h$ is a real-valued function on some topological space $X$ and $A\sub X$ then 
for $x\in A$ we denote by $\osc_x(h,A)$ the oscillation of $h$ at $x$ on the set $A$. i.e.\ 
\[\osc_x(h,A)=\inf_U\sup\{|h(x')-h(x'')|: x',x''\in U\cap A\},\]
where the infimum is taken over all open neighbourhoods $U$ of $x$.

\section{Isomorphic embeddings}

Lemma \ref{ie:1} was noted by Jarosz  \cite{Ja84}.

\begin{lemma}\label{ie:1}
If $\mu=\mu_x$ for some fixed $x\in K$ then $||\nu_y||\ge m$ for $\mu$-almost all $y\in L$.
\end{lemma}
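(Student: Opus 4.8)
The plan is to argue by contradiction, using the fact that $\mu_x$ is not merely \emph{some} Hahn--Banach extension but realizes the smallest norm among all $\nu\in M(L)$ with $T^*\nu=\delta_x$. Indeed, $T^*\nu=\delta_x$ says $\nu(Tg)=g(x)$ for every $g\in C(K)$, i.e.\ the restriction of $\nu$ to $E$ is the functional $Tg\mapsto g(x)$; hence any such $\nu$ satisfies $\|\nu\|\ge\|\mu_x\|$, because $\mu_x$ is by construction a norm-preserving extension of that functional. I also record that $\theta(y)=\|\nu_y\|$ is lower semicontinuous: $\mu\mapsto\|\mu\|$ is lower semicontinuous on $M(K)$ by Lemma~\ref{pr:1}(i) with $g\equiv1$, and $y\mapsto\nu_y$ is $weak^*$-continuous. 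Thus $F_\eta:=\{y\in L:\theta(y)\le m-\eta\}$ is closed for each $\eta>0$, and since $\{y:\theta(y)<m\}=\bigcup_{n\ge1}F_{1/n}$ it suffices to show $|\mu_x|(F_\eta)=0$ for every $\eta>0$.

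Fix $\eta>0$, write $F=F_\eta$ and $c=|\mu_x|(F)$, and consider the restriction $\rho=\mu_x\uhr F\in M(L)$, so that $|\rho|=|\mu_x|\uhr F$ and $\|\rho\|=c$. Since $(T^*\rho)(g)=\rho(Tg)=\int_F\nu_y(g)\dx\rho(y)$ and $|\nu_y(g)|\le\theta(y)\,\|g\|$ for all $g\in C(K)$, the measure $\kappa:=T^*\rho\in M(K)$ satisfies
\[\|\kappa\|\le\int_F\theta\dx|\rho|=\int_F\theta\dx|\mu_x|\le(m-\eta)\,c.\]
On the other hand, the lower estimate $m\|g\|\le\|Tg\|$ means precisely that $\|T^{-1}f\|\le\tfrac1m\|f\|$ for $f\in E$; consequently the functional $\xi\in E^*$ defined by $\xi(f)=\kappa(T^{-1}f)$ has $\|\xi\|\le\tfrac1m\|\kappa\|\le(1-\eta/m)\,c$. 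Extending $\xi$ by Hahn--Banach to $\rho'\in M(L)$ with $\|\rho'\|\le(1-\eta/m)\,c$, we get $(T^*\rho')(g)=\rho'(Tg)=\xi(Tg)=\kappa(g)$ for every $g$, that is $T^*\rho'=\kappa=T^*\rho$.

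It remains to put $\mu_x'=\mu_x-\rho+\rho'$. Then $T^*\mu_x'=\delta_x-T^*\rho+T^*\rho'=\delta_x$, so $\|\mu_x'\|\ge\|\mu_x\|$ by the first paragraph; but $\mu_x-\rho=\mu_x\uhr(L\sm F)$, hence
\[\|\mu_x'\|\le\|\mu_x\uhr(L\sm F)\|+\|\rho'\|\le\bigl(\|\mu_x\|-c\bigr)+(1-\eta/m)\,c=\|\mu_x\|-\tfrac{\eta}{m}\,c.\]
Comparing the two inequalities forces $\tfrac{\eta}{m}c\le0$, so $c=|\mu_x|(F)=0$, which is what we wanted. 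I expect the only genuine obstacle to be the idea behind the first paragraph: the naive triangle-inequality estimates coming from $\|\nu_y\|\le\|T\|\le1$ merely bound the $|\mu_x|$-mass of $\{\theta<m\}$ rather than show it vanishes, and one has to exploit minimality of $\|\mu_x\|$ by ``redistributing'' the inefficient mass $\rho$ carried over $F$ onto the cheaper measure $\rho'$.
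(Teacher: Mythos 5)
Your proof is correct: the minimality claim in your first paragraph is exactly right (any $\nu\in M(L)$ with $T^*\nu=\delta_x$ agrees with $\mu_x$ on $E$, so $\|\nu\|\ge\|\nu\uhr E\|=\|\mu_x\uhr E\|=\|\mu_x\|$ because $\mu_x$ is a norm-preserving extension), the bound $\|T^*(\mu_x\uhr F)\|\le(m-\eta)\,|\mu_x|(F)$ follows from $Tg(y)=\nu_y(g)$ and $\theta\le m-\eta$ on $F$, and the identity $\xi(f)=\kappa(T^{-1}f)=\rho(f)$ for $f\in E$ (so $\xi$ is just $\rho\uhr E$) makes the re-extension step and the final comparison airtight. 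But your route is genuinely different from the paper's. The paper never perturbs $\mu_x$: it introduces the sets $N_r=\{y:\|\delta_y\uhr E\|\le r\}$, picks $g$ with $\|Tg\|\le 1$ and $\mu_x(Tg)>\|\mu_x\|-\eps$ (near norm-attainment on $E$, again the Hahn--Banach property), and splits $\int Tg\,{\rm d}\mu_x$ over $N_r$ and its complement to force $|\mu_x|(N_r)\le\eps/(1-r)$, hence $|\mu_x|(N_r)=0$; only afterwards does it transfer from $\|\delta_y\uhr E\|=1$ to $\|\nu_y\|\ge m$ via $\|g\|\le\|Tg\|/m$. You instead work directly with $\theta(y)=\|\nu_y\|$, exploit the same Hahn--Banach property in the form of norm minimality of $\mu_x$ among $T^*$-preimages of $\delta_x$, and run an exchange argument: the mass of $\mu_x$ sitting where $\theta\le m-\eta$ can be replaced, after pushing through $T^*$ and pulling back by $T^{-1}$, by a strictly cheaper Hahn--Banach extension, contradicting minimality unless that mass is zero. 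The paper's version is a bit shorter and avoids re-extending; yours avoids the auxiliary set $N$ and the test-function splitting, isolates the variational principle ($\mu_x$ minimizes norm in $(T^*)^{-1}(\delta_x)$) explicitly, and gives a quantitative redistribution estimate ($\|\mu_x'\|\le\|\mu_x\|-\tfrac{\eta}{m}c$) that one could reuse elsewhere.
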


\begin{proof}
Let $N=\{y\in L: ||{\delta_y}_{|E}||<1 \}$; then $\mu(N)=0$.

Indeed, we have $N=\bigcup_{r<1}N_r$, where the sets 
\[N_r=\{y\in L: ||{\delta_y}_{|E}||\le r \},\]
 are closed; it is therefore sufficient to check that $|\mu|(N_r)=0$ for every $r<1$.

Take any $\eps>0$ and $g\in C(K)$ such that $||Tg||\le 1$ and $\mu(Tg)> ||\mu||-\eps$.
Then
\[||\mu||-\eps=|\mu|(N_r)+|\mu|(L\sm N_r)-\eps< \mu(Tg)=\]
\[=\int_{N_r} Tg \dx\mu+\int_{L\sm N_r} Tg\dx\mu\le r|\mu|(N_r)+|\mu|(L\sm N_r),\]
which gives $|\mu|(N_r)\le \eps/(1-r)$ and, consequently, $|\mu|(N_r)=0$, as $\eps>0$ is arbitrary.

Now for any $y\in L\sm N$ and any $\eps>0$ there is
$g\in C(K)$, $||Tg||\le 1$ such that $|Tg(y)|> 1-\eps$. Then $||g||\le 1/m$ and 
$|\nu_y(g)|=|Tg(y)|>1-\eps$. This implies $||\nu_y||\ge m$, and we are done.
\end{proof}

\begin{lemma}\label{ie:2}
Consider  a fixed $x\in K$ and the measure $\mu=\mu_x$. 
Assume that $\eps>0$ and that there is a compact subset $F\sub L$ such that 

\begin{itemize}
\item[(i)] $||\nu_y||\ge m$ for every $y\in F$;
\item[(ii)] $\osc_y(\theta,F)\le \eps$ for every $y\in F$;
\item[(iii)]  $|\mu|(L\sm F)<\eps$.
\end{itemize}

Then there is $y\in F$ such that $|\nu_y\{x\}|\ge m-2\eps$.
\end{lemma}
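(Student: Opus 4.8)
The plan is to exploit the hypotheses to locate a point $y\in F$ where the measure $\nu_y$ is "almost entirely concentrated at $x$" in the sense that its total variation away from $x$ is small. The natural mechanism is a Radon--Nikodym/approximation argument using a carefully chosen $g\in C(K)$. First I would use the definition of $\mu=\mu_x$ as a Hahn--Banach extension to pick, for a given small $\eps>0$, a function $g\in C(K)$ with $\|Tg\|\le 1$ and $\mu(Tg)=\mu_x(Tg)=g(x)>1-\eps$ (so $g(x)$ is close to $1$ and $\|g\|\le 1$). Averaging $g$ against $\mu$ over $F$ and using (iii), $|\mu|(L\sm F)<\eps$, I can arrange that $\bigl|\int_F Tg\dx\mu\bigr|$ is close to $1$, so "most of the mass of $\mu$ restricted to $F$ sees $Tg$ with value near $1$". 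Since $\mu(Tg)=\int_L Tg\dx\mu$ and $\|Tg\|\le 1$, a Markov-type estimate yields a point $y\in F$ (in fact a set of positive $|\mu|$-measure) with $Tg(y)=\nu_y(g)>1-c\eps$ for a small constant $c$, while $|\mu|$-a.e.\ such $y$ also has $\|\nu_y\|\le\theta(y)$ under control.

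The second ingredient is to turn "$\nu_y(g)$ is close to $1$, $\|g\|\le 1$, $\|\nu_y\|$ not much bigger than $m$" into "$\nu_y$ has a big atom at $x$". For this I would combine (i) and (ii): by Lemma~\ref{ie:1} we already know $\|\nu_y\|\ge m$ for $\mu$-a.e.\ $y$, and (i) gives this on all of $F$; the oscillation bound (ii), $\osc_y(\theta,F)\le\eps$, together with lower semicontinuity of $\theta$ lets me find a relatively open neighbourhood of such a $y$ on which $\theta$ stays below $m+\eps$ (after possibly passing to a slightly smaller $g$ localised near $x$). Then for this $y$ we have a measure $\nu_y$ of norm at most roughly $m+\eps$, and a function $g$ with $\|g\|\le 1$, $g(x)$ near $1$, with $\nu_y(g)$ near $1$. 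Writing $\nu_y=\nu_y\{x\}\cdot\delta_x+\nu'$ where $\nu'=\nu_y\uhr(K\sm\{x\})$, the inequality
\[
1-c\eps<\nu_y(g)=\nu_y\{x\}g(x)+\nu'(g)\le |\nu_y\{x\}|+\|\nu'\|=|\nu_y\{x\}|+\|\nu_y\|-|\nu_y\{x\}|
\]
looks circular, so the key refinement is to also shrink the support of $g$: choose $g$ additionally supported near $x$ with $g(x)=1$ except on a small neighbourhood, which forces $\nu'(g)$ to be small in absolute value (bounded by the part of $|\nu_y|$ near $x$, which is at most $\|\nu_y\|-|\nu_y\{x\}|$ minus a controllable amount). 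Carrying the bookkeeping through gives $|\nu_y\{x\}|\ge 1-c\eps-(\|\nu_y\|-|\nu_y\{x\}|)$; rearranging with $\|\nu_y\|\le m+(\text{small})$ and using $m\le 1$ yields $2|\nu_y\{x\}|\ge m-O(\eps)$, i.e.\ $|\nu_y\{x\}|\ge m-2\eps$ after adjusting constants.

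I expect the main obstacle to be the precise juggling of the three small quantities (the $\eps$ from the Hahn--Banach approximation, the $\eps$ from the oscillation bound (ii), and the $\eps$ from (iii)), and in particular choosing $g$ so that it simultaneously (a) nearly attains the norm of $\mu$, (b) is localised near $x$ so that $\nu_y$ restricted away from $x$ contributes little, and (c) keeps $\|Tg\|\le 1$. The localisation step—replacing $g$ by $g$ multiplied by a bump function equal to $1$ near $x$—must be done before applying $T$, and one has to check it does not spoil $\|Tg\|\le 1$ too badly; this is where one uses that $\|T\|\le 1$ and the flexibility in the Hahn--Banach extension. Once the right $g$ is fixed, the measure-theoretic extraction of the good $y$ and the final arithmetic giving the constant $m-2\eps$ should be routine.
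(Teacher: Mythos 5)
There is a genuine gap, and it is precisely at the step you yourself flag as ``looks circular.'' Your plan finds, for one test function $g$ localised in a neighbourhood of $x$, a point $y\in F$ where $\nu_y(g)$ is large; but that only shows that $|\nu_y|$ carries mass about $m-\eps$ \emph{in the neighbourhood supporting} $g$, not an atom at the single point $x$. To produce an atom you must let the neighbourhood shrink to $\{x\}$, and then the witnessing point changes with the neighbourhood: for each open $H\ni x$ the duality $1=f_H(x)=\mu(Tf_H)$, together with (iii) and $\|\mu\|\le 1/m$, gives some $y_H\in F$ with $|\nu_{y_H}|(H)\ge m-\eps$, but nothing in your argument yields one $y$ that works simultaneously for all $H$; shrinking the support of $g$, which is your proposed repair, simply changes the good $y$, so the bookkeeping cannot close. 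The paper's proof resolves exactly this: it takes the net $(y_H)$ indexed by neighbourhoods of $x$, extracts a convergent subnet $y_H\to y\in F$ by compactness of $F$, and transfers the concentration to the limit. Since $|\nu_{y_H}|(H)\ge m-\eps$ and, by (ii), $\|\nu_{y_H}\|\le\|\nu_y\|+\eps$ along the subnet, one gets $|\nu_{y_H}|(1-g)\le\|\nu_y\|-m+2\eps$ for a bump $g$ equal to $1$ near $x$ and vanishing outside a given open $U\ni x$; lower semicontinuity of $\mu\mapsto|\mu|(1-g)$ (Lemma \ref{pr:1}(i)) then gives $|\nu_y|(U)\ge m-2\eps$, and regularity yields the atom. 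This compactness-plus-semicontinuity mechanism, which is the whole reason hypothesis (ii) is there, is absent from your proposal.

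A second, independent error is your reading of (ii): you claim it lets you assume $\theta\le m+\eps$ near the chosen $y$. It does not --- (ii) bounds the \emph{oscillation} of $\theta$ on $F$, not its value; $\theta$ could be identically $0.9$ on $F$ while $m=0.3$, with zero oscillation. The argument must work when $\|\nu_y\|$ is much larger than $m$, and (ii) is used only relatively: the norms along the convergent subnet exceed the norm at the limit by at most $\eps$, which is what forbids the cancellation that would otherwise let the mass accumulating at $x$ disappear in the weak$^*$ limit. Your final arithmetic, which relies on $\|\nu_y\|\le m+\eps$, is therefore unjustified, and even granting it the displayed inequality only returns a lower bound on $\|\nu_y\|$, not on $|\nu_y\{x\}|$. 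Finally, the quantitative claim $\nu_y(g)>1-c\eps$ is not available: with $\|\mu\|\le 1/m$ and $\|Tg\|\le 1$ the averaging argument only produces a point of $F$ where $|Tg|\ge m-\eps$ (after discarding $L\sm F$ via (iii)), which is the bound the paper actually uses.
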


\begin{proof} 
Let $H$ be any neighbourhood of $x$ and $f_H:K\to [0,1]$ be a continuous function 
such that $f_H(x)=1$ and $f_H=0$ outside $H$.
We shall check that there is $y_H\in F$ such that $Tf_H(y_H)\ge m-\eps$.

Indeed, otherwise $|\nu_y(f_H)|<m-\eps$ for $y\in  F $ which, together with $|\mu|( F )\le ||\mu||\le 1/m$
and $||T||=1$,  would give 
\[
1=f_H(x)=\mu(Tf_H)=
\int_{ F }Tf_H\dx\mu+\int_{L\sm  F } Tf_H\dx\mu < \]
\[< (m-\eps)|\mu|( F )+\eps\le\frac{m-\eps}{m}+\eps\le 1,\]
a contradiction. In particular, it follows that $|\nu_{y_H}|(H)\ge m-\eps$.
 
The net $y_H$ ordered by the reverse inclusion of the $H$'s has a converging subnet
$(y_H)_{H\in \HH}$; denote its limit by $y$. By (ii) we may assume that $||\nu_{y_H}||\le ||\nu_y||+\eps$
for every $H\in\HH$.

We shall prove that $|\nu_y|(\{x\})\ge m-2\eps$; by regularity it suffices to
check that $|\nu_y|(U)\ge m-2\eps$ for every open set $U\ni x$. 

Given such an open set $U\ni x$, choose a continuous function $g:K\to [0,1]$ such that
$g=0$ outside $U$ and $g=1$ on an open set $V$ containing $x$. For any $H\in\HH$ with $H\sub V$
we have $|\nu_{y_H}|(g)\ge |\nu_{y_H}|(H)\ge m-\eps$.
Hence
 \[|\nu_{y_H}|(1-g) \le |\nu_{y_H}|(K)-(m-\eps)\le |\nu_y|(K)+\eps-(m-\eps)=|\nu_y|(K)-m+2\eps.\]
Since $\nu_{y_H}\to\nu_y$, it follows from Lemma \ref{pr:1}(i) and the above inequality that
\[|\nu_y|(1-g)\le |\nu_y|(K)-m+2\eps.\]
 We conclude that $|\nu_y|(U)\ge |\nu_y|(g)\ge m-2\eps$ and the proof is complete.
\end{proof}

We are ready for the main result of this section.

\begin{theorem}\label{ie:3}
If $T:C(K)\to C(L)$ is an isomorphic embedding then, writing $\nu_y=T^*\delta_y$ for 
$y\in L$, for every $x\in K$
we have 
\[\sup\{|\nu_y(\{x\})|:y\in L\}\ge \frac{1}{||T||||T^{-1}||}.\]
\end{theorem}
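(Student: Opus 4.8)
The plan is to derive Theorem \ref{ie:3} from Lemma \ref{ie:2} by producing, for a fixed $x\in K$ and any $\eps>0$, a compact set $F\sub L$ satisfying conditions (i)--(iii) of that lemma with $\eps$ replaced by a suitably small quantity; then $|\nu_y(\{x\})|\ge m-2\eps'$ for some $y\in F$, and letting $\eps'\to 0$ gives $\sup_y|\nu_y(\{x\})|\ge m$. Since $m$ can be taken arbitrarily close to $1/||T^{-1}||$ after rescaling $T$ to have norm $1$ (or, keeping $||T||$ general, $m\cdot||T^{-1}||$ can be pushed to $1$), this yields the stated bound $1/(||T||\,||T^{-1}||)$. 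So the whole content is the construction of $F$.

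First I would normalize: by scaling we may assume $||T||=1$ and take $m$ any number below $1/||T^{-1}||$; fix $x\in K$, set $\mu=\mu_x$, so $||\mu||\le 1/m$ and, by Lemma \ref{ie:1}, $\theta(y)=||\nu_y||\ge m$ for $|\mu|$-almost every $y\in L$. This immediately gives a version of (i): there is a Borel set of full $|\mu|$-measure on which $\theta\ge m$. To get (iii) together with compactness, use inner regularity of $|\mu|$ to find a compact $F_0$ with $|\mu|(L\sm F_0)<\eps$ on which $\theta\ge m$ still holds (intersect the full-measure set with a slightly smaller compact set). The remaining and genuinely delicate point is (ii): we need a compact $F\sub F_0$, still of $|\mu|$-mass close to full, on which the lower semicontinuous function $\theta$ has oscillation $\le\eps$ at each of its points.

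The hard part will be arranging the small-oscillation condition (ii) simultaneously with (i) and (iii). The idea is that $\theta$ is lower semicontinuous and bounded (by $||T||=1$), hence it is continuous at the points of a dense $G_\delta$ subset of $L$; more usefully, for a lower semicontinuous bounded function one can, for each $\eps>0$, partition $L$ into finitely many Borel sets $B_1,\dots,B_n$ on which $\theta$ varies by less than $\eps$ (slice the range $[0,1]$ into intervals of length $<\eps$), then inner-regularity gives compact sets $C_i\sub B_i$ capturing almost all the $|\mu|$-mass of each $B_i$. Taking $F$ to be the union of those $C_i$ that meet $F_0$, intersected with $F_0$: on each piece $\theta$ has small variation, and if the pieces are pairwise disjoint and the range-slicing is done with closed-from-below intervals, the oscillation of $\theta$ at any point of $F$ computed relative to $F$ is at most $\eps$ — one must be a little careful at points lying near the "boundary" between two pieces, which is why the slicing into half-open intervals $[k\eps,(k+1)\eps)$ and the disjointness of the $C_i$ matter. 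Once (i), (ii), (iii) all hold for this $F$ with a parameter $\eps$ (after relabeling, say $2\eps$), Lemma \ref{ie:2} applies verbatim and finishes the proof; the final step of optimizing $m\to 1/||T^{-1}||$ and $\eps\to 0$ is routine.
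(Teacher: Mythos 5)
Your proposal is correct and takes essentially the same route as the paper: Lemma \ref{ie:1} gives $\theta\ge m$ $|\mu_x|$-almost everywhere, one extracts a compact $F$ of nearly full $|\mu_x|$-measure on which $\theta\ge m$ and has small relative oscillation, and Lemma \ref{ie:2} then yields the bound. The only cosmetic difference is that the paper produces such an $F$ in one stroke by Lusin's theorem (so that $\theta$ is actually continuous on $F$), whereas you prove the needed special case by hand via slicing the range into half-open intervals and using inner regularity; your disjointness-of-compact-pieces argument for the oscillation bound is sound, so the two proofs are essentially identical in substance.
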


\begin{proof}
Clearly we can assume that $||T||=1$ and denote $m=1/||T^{-1}||>0$.
The function $\theta:L\ni y\to ||\nu_y||$ is lower semicontinuous hence Borel. Given $x\in K$, 
by Lemma \ref{ie:1}  $||\nu_y||\ge m$ $\mu_x$-almost everywhere.
By the Lusin theorem we can therefore find for any  $\eps>0$  a compact set $ F \sub L$ 
with $|\mu|(L\sm F )<\eps$ and such that $\theta$ is continuous on $F$ and $\theta(y)\ge m$ for $y\in F$.
Applying Lemma \ref{ie:2} we finish  the proof.
\end{proof}

%
%
%

We conclude this section by showing the following result on Corson compacta. 

\begin{corollary}\label{ie:5}
Let $K$ be such a compact space that $\card K>\con = \card [C(K)]$ and that
$L$ is Corson compact.
Then $C(K)$ cannot be embedded into $C(L)$.
\end{corollary}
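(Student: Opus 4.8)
The plan is to assume, towards a contradiction, that $T\colon C(K)\to C(L)$ is an isomorphic embedding, and to apply Theorem \ref{ie:3} --- but not to $T$ directly; rather after replacing $L$ by a small Corson quotient of it. First, since $\card[C(K)]=\con$, the subspace $E=T[C(K)]$ of $C(L)$ satisfies $\card E=\con$, hence has density at most $\con$, so the closed unital subalgebra $A\sub C(L)$ generated by $E$ has density at most $\con$ as well (a dense subalgebra is obtained by taking rational polynomial combinations of elements of a dense subset of $E$ together with the unit). By the Gelfand--Stone--Weierstrass theory, $A$ is isometric to $C(L')$, where $L'$ is the compact space obtained from $L$ by identifying points not separated by $A$; being a continuous image of $L$, the space $L'$ is again Corson compact, and $w(L')=d(C(L'))=d(A)\le\con$ (note $L'$ is infinite, as $C(L')\supseteq E\cong C(K)$ is infinite dimensional).

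A Corson compact space of weight at most $\con$ is homeomorphic to a subspace of $\Sigma(\er^{\con})$, and the latter has cardinality at most $\con^{\omega}=\con$; hence $\card L'\le\con$. On the other hand, $E\sub A$ isometrically and $A$ is isometric to $C(L')$, so composing $T$ with these isometric identifications yields an isomorphic embedding $S\colon C(K)\to C(L')$ with $||S||=||T||$ and, on the range of $S$, $||S^{-1}||=||T^{-1}||$.

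Now apply Theorem \ref{ie:3} to $S$: for every $x\in K$ there is $y\in L'$ with $|S^*\delta_y(\{x\})|\ge c/2$, where $c=1/(||T||\,||T^{-1}||)>0$. Since $||S^*\delta_y||\le||T||$ for every $y\in L'$, each measure $S^*\delta_y$ has at most $2||T||/c$ atoms of absolute value $\ge c/2$, so the finite sets $\{x\in K: |S^*\delta_y(\{x\})|\ge c/2\}$, $y\in L'$, cover $K$. Therefore $\card K\le\omega\cdot\card L'\le\con$, contradicting the hypothesis $\card K>\con$. The place where the argument genuinely uses the assumption on $\card[C(K)]$ is exactly this passage from $L$ to $L'$: applied to $T$ itself, Theorem \ref{ie:3} would only give $\card K\le\omega\cdot\card L$, which is worthless since Corson compacta can be arbitrarily large; it is the bound $\card L'\le\con$ that makes the atom counting conclusive. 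The remaining verifications --- that $L'$ is Corson of weight at most $\con$, and that passing to the generated subalgebra preserves the embedding with the same constants --- are routine.
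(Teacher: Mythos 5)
Your proof is correct and follows essentially the same route as the paper: pass to the Corson quotient $L'$ of $L$ on which the image of $C(K)$ separates points, bound its weight and hence its cardinality by $\con$ using the embedding into $\Sigma(\er^\con)$, and then apply Theorem \ref{ie:3} to cover $K$ by the finite sets of large atoms of the measures $S^*\delta_y$, $y\in L'$. The only difference is that you spell out the quotient step via the closed subalgebra generated by $T[C(K)]$ and Stone--Weierstrass, which the paper leaves as a remark (cf.\ its Lemma \ref{pr:2}).
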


\begin{proof}
Suppose otherwise and let $T:C(K)\to C(L)$ be an embedding, where $L$ is Corson compact.
Since the class of Corson compacta is closed under taking continuous images, 
by passing to a quotient of $L$ we can additionally assume that the functions from $E=T[C(K)]$ distinguish points of $L$.
As the space $C(K)$ has cardinality $\con$, 
this implies that the topological weight of $L$ is at most $\con$. Thus $L$ is homeomorphic to a subspace of $\Sigma(\er^\con)$
so in particular $\card L\le \con$. 

On the other hand, the cardinality of the sets $\{ x\in K: |\nu_y|\{x\})\ge m/2\}$ is at most $2/m$ and they cover 
all of $K$ by Theorem \ref{ie:3}. It follows that $\card K\le\con$,
contrary to our assumption.
\end{proof}

\begin{corollary}\label{ie:6}
Assuming CH, $C(2^{\omega_1})$ cannot be embedded into $C(L)$ with $L$ being Corson compact.
\end{corollary}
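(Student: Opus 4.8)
The plan is to read off Corollary \ref{ie:6} from Corollary \ref{ie:5} applied with $K=2^{\omega_1}$; the only thing that requires checking is that, under CH, the space $2^{\omega_1}$ satisfies the cardinality hypothesis $\card K>\con=\card[C(K)]$ of that corollary. First I would record that under CH we have $\con=\omega_1$, so by Cantor's theorem $\card 2^{\omega_1}=2^{\omega_1}>\omega_1=\con$, which gives $\card K>\con$.

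Next I would estimate $\card[C(2^{\omega_1})]$. The compact space $2^{\omega_1}$ has topological weight $\omega_1$, hence $C(2^{\omega_1})$ has a dense subset of cardinality $\omega_1$ (for instance the rational linear combinations of indicator functions of basic clopen subsets of $2^{\omega_1}$). Since a Banach space is a metric space, every one of its points is the limit of a sequence from a fixed dense set, so a Banach space with a dense set of size $\kappa\ge\omega$ has cardinality at most $\kappa^{\omega}$. Consequently $\con\le\card[C(2^{\omega_1})]\le\omega_1^{\omega}$, and here CH enters: $\omega_1^{\omega}=(2^{\omega})^{\omega}=2^{\omega}=\con$. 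Thus $\card[C(2^{\omega_1})]=\con$, and together with the previous paragraph we get $\card K=2^{\omega_1}>\con=\card[C(K)]$, so Corollary \ref{ie:5} applies and yields that $C(2^{\omega_1})$ cannot be embedded into $C(L)$ for any Corson compact $L$.

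I do not expect a genuine obstacle here: the argument is nothing but the cardinal arithmetic above placed on top of Corollary \ref{ie:5}. The one place that needs care --- and the one place where the continuum hypothesis is actually invoked --- is the identity $\omega_1^{\omega}=\con$; without some such assumption the inequality $\card 2^{\omega_1}>\con$ can itself fail, so the reduction to Corollary \ref{ie:5} is genuinely CH-dependent.
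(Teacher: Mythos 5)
Your argument is correct and follows the same overall route as the paper: both reduce Corollary \ref{ie:6} to Corollary \ref{ie:5} by checking $\card K>\con=\card[C(K)]$ for $K=2^{\omega_1}$. The only difference is how $\card[C(2^{\omega_1})]\le\con$ is obtained. The paper counts continuous functions directly: each $f\in C(2^{\omega_1})$ depends on countably many coordinates, so there are at most $\omega_1^\omega\cdot\con=\con$ of them --- a bound valid in ZFC. You instead use that the density character of $C(2^{\omega_1})$ is $\omega_1$ and that a Banach space with a dense set of size $\kappa$ has at most $\kappa^\omega$ points, then compute $\omega_1^\omega=\con$ from CH. Note, however, that your bound also holds in ZFC, since $\omega_1\le\con$ gives $\omega_1^\omega\le(2^\omega)^\omega=\con$ outright; so your closing remark mis-locates where CH is essential. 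In both your argument and the paper's, CH is needed only to guarantee $\card 2^{\omega_1}>\con$ (in the paper's formulation, $\card K=2^\con>\con$), which is exactly why the paper can remark afterwards that CH may be relaxed to the hypothesis $2^{\omega_1}>\con$; your proof, once the redundant use of CH in the cardinal computation is removed, supports the same relaxation.
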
  

\begin{proof}
Under CH the Cantor cube $K=2^{\omega_1}$ has cardinality $2^\con>\con$. 
Moreover, there are only $\con$ many continuous functions on $K$ because every
such a function is determined by countably many coordinates. Hence
we can apply Corollary \ref{ie:5}. 
\end{proof}

Note that in Corollary \ref{ie:6} CH can be relaxed to $2^{\omega_1}>\con$.
At this point it is worth recalling that under MA+$\neg$CH, if $L$ is Corson compact then
$M_1(L)$ is also Corson compact in its $weak^*$ topology, Argyros et al. \cite{AMN}.
Consequently, if $T: C(K)\to C(L)$ is an embedding then $T^*[M_1(L)]$ is Corson compact and
so is $K$ ($T^*[M_1(L)]$ contains a ball in $M(K)$ because $T^*$ is onto and $K$ can be
embedded into the space of measures via the mapping $K\ni x\to \delta_x$).
We do not know if Corollary \ref{ie:6} can be proved without any extra set-theoretic
axioms, i.e.\ we do not know what happens when $2^{\omega_1}=\con$ but MA does not hold. It will become clear in section 6 that spaces such as $C(2^{\omega_1})$ cannot be isomorphic
to a space $C(L)$ whenever $L$ is Corson compact.

\section{Isomorphisms}

Keeping the notation from section 2, we shall now consider the case when $T:C(K)\to C(L)$ 
is an isomorphism. Note that in this case the measure $\mu_x\in M(L)$  is uniquely determined
by the condition $\mu_x(Tg)=g(x)$, $g\in C(K)$.

We start by the following general observation on lower semicontinuous (lsc) functions.

\begin{lemma}\label{i:12}
Let $f$ be a bounded lsc function on $K$, let $U$ be a nonempty  open subset of $K$ 
and fix $\eta>0$. Then there is a nonempty open set $V\sub U$ such that the oscillation of $f$ on $V$
is $\le\eta$. The same is true for a finite collection of bounded lsc functions or for differences
of such functions. 
\end{lemma}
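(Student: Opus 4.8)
The plan is to reduce everything to the case of a single bounded lower semicontinuous function $f$ and then exploit the defining feature of lower semicontinuity: the super-level sets $\{x\in K: f(x)>t\}$ are open. The idea is that $f$ restricted to such a super-level set lying inside $U$ automatically stays above $t$, while on all of $U$ it stays below its own supremum; so if $t$ is close enough to that supremum, the super-level set is a nonempty open subset of $U$ on which $f$ varies by less than $\eta$.

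Concretely, for a single $f$ I would put $\beta=\sup\{f(x):x\in U\}$, which is finite since $f$ is bounded, and pick $t$ with $\beta-\eta<t<\beta$. Set $V=\{x\in U: f(x)>t\}=U\cap\{x\in K:f(x)>t\}$. Then $V$ is open, because $U$ is open and $f$ is lsc, and $V$ is nonempty, because $t<\beta=\sup_U f$ means $t$ fails to be an upper bound of $f$ on $U$. Finally, for every $x\in V$ we have $t<f(x)\le\beta$, so $f(V)\sub(t,\beta]$ and the oscillation of $f$ on $V$ is at most $\beta-t<\eta$. The one point to be careful about is that $f$ need not attain $\beta$ on $U$, but working with the supremum rather than with a maximiser sidesteps this, and no Baire category argument is needed.

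For a finite collection $f_1,\dots,f_n$ of bounded lsc functions I would iterate: apply the single-function case to $f_1$ inside $U$ to get $V_1$, then to $f_2$ inside $V_1$ to get $V_2\sub V_1$, and so on, obtaining $V_n\sub\cdots\sub V_1\sub U$; since the oscillation of a function does not increase when one passes to a smaller set, $V=V_n$ has oscillation $\le\eta$ for each $f_i$ simultaneously. For a finite collection of differences $f_j=g_j-h_j$ $(j=1,\dots,k)$ of bounded lsc functions, I would apply the previous step to the finite family $g_1,h_1,\dots,g_k,h_k$ with $\eta$ replaced by $\eta/2$, obtaining a nonempty open $V\sub U$ on which each $g_j$ and each $h_j$ oscillates by at most $\eta/2$; then each $f_j=g_j-h_j$ oscillates on $V$ by at most $\eta$.

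There is no serious obstacle here: the statement is a soft consequence of the definition of lower semicontinuity. The only mild subtleties worth flagging in the write-up are the possible non-attainment of the supremum just mentioned, and the bookkeeping that the oscillation bound obtained for one function survives the later shrinking of $V$ needed to accommodate the remaining functions.
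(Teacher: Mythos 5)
Your argument is correct, and for the heart of the lemma (the single-function case) it takes a genuinely different, slightly more direct route than the paper. The paper stratifies $K$ by the closed sublevel sets $C_i=\{x\in K: f(x)\le i\eta\}$, takes the minimal $i$ with $U\cap\mathrm{int}(C_i)\neq\emptyset$, and uses that minimality to see that $V=(U\cap\mathrm{int}(C_i))\setminus C_{i-1}$ is a nonempty open set on which $f$ lives in the strip $((i-1)\eta,i\eta]$; this ``half-open strip'' decomposition is the same device the author exploits elsewhere in the paper, which is presumably why it is phrased that way. You instead set $\beta=\sup_U f$, choose $t\in(\beta-\eta,\beta)$, and take $V=U\cap\{f>t\}$, which is open by lower semicontinuity, nonempty since $t<\beta$, and on which $f$ ranges in $(t,\beta]$, giving oscillation $<\eta$; your remark about non-attainment of the supremum is exactly the right point to flag, and your version in fact only needs $f$ to be bounded above on $U$. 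The reductions to finitely many functions (iterate and shrink, oscillation being monotone under restriction) and to differences (apply the previous case to the two lsc summands with $\eta/2$) coincide with the paper's. In short: your proof is a cleaner, self-contained argument for this lemma, while the paper's proof showcases the stratification technique it reuses later; both are complete.
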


\begin{proof}
By our assumption on $f$, the set $C_i=\{x\in K: f(x)\le i\eta\}$ is closed for every integer $i$. As $f$ is bounded there is minimal
$i$ such that $U\cap\wn{C_i}\neq\emptyset$. By minimality, $U\cap\wn{C_i}$ 
is not contained in $C_{i-1}$ so $V=(U\cap\wn{C_i})\sm C_{i-1}\neq\emptyset$.
The oscillation of $f$ is smaller than $\eta$ on $C_i\sm C_{i-1}$ so certainly on $V$.

Given bounded lsc $f_1,\ldots, f_k$ and an open set $U=V_0$ we just iterate the step above 
to find open sets $V_i\sub V_{i-1}$ such that the oscillation of $f_i$ is smaller than $\eta$.

If $f_j=g_j-g_j'$ where $g_j, g_j'$ are bounded lsc functions we can apply the above argument to $g_j$'s with $\eta/2$.
\end{proof}

\begin{lemma}\label{i:3}
Let $Y_1\sub Y_2\sub Y_3=L$ be closed subsets of $L$ and let 
$\eta>0$. Suppose that $U\sub K$ is an open neighbourhood of $x_0\in K$ such that for $j=1,2,3$
and every $x\in U$
\[(*)\quad  \big| |\mu_x|(Y_j) - |\mu_{x_0}|(Y_j) \big|<\eta.\]
Then there is an open set $V$ with $x_0\in  V\sub U$ and a compact set $L_1\sub Y_2\sm Y_1$ such that
for every $x\in V$ we have
\[|\mu_x|(L_1)>|\mu_x|(Y_2\sm Y_1)-4\eta.\]

More generally, if $Z_1\sub Z_2\sub \ldots \sub Z_N=L$ are closed sets, $\eta>0$ and
(*) holds for every $1\le j\le N$ then for any open set $U$ with
 $x_0\in U\sub K$ there are an open set $x_0\in V \sub U$ and compact sets $L_j\sub Z_{j+1}\sm Z_j$, $j\le N-1$ such that
\[|\mu_x|(L_j)>|\mu_x|(Z_{j+1}\sm Z_j)-4\eta,\]
for every $x\in V$.
\end{lemma}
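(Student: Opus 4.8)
The plan is to construct $L_1$ by approximating $Y_1$ from \emph{outside}. Using regularity of the Radon measure $|\mu_{x_0}|$ together with normality of the compact space $L$, I would choose open sets $G_1,G$ with $Y_1\sub G_1\sub\ol{G_1}\sub G$ and $|\mu_{x_0}|(G)<|\mu_{x_0}|(Y_1)+\eta$, so that in particular $|\mu_{x_0}|(\ol{G_1})<|\mu_{x_0}|(Y_1)+\eta$. Then I would put $L_1=Y_2\cap(L\sm G_1)$: this set is closed in $L$, hence compact, and since $Y_1\sub G_1$ it misses $Y_1$, so $L_1\sub Y_2\sm Y_1$ as required. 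The virtue of this choice is that the uncaptured part is small for purely set-theoretic reasons: $(Y_2\sm Y_1)\sm L_1=Y_2\cap(L\sm Y_1)\cap G_1\sub\ol{G_1}\sm Y_1$, so it will be enough to show $|\mu_x|(\ol{G_1}\sm Y_1)<4\eta$ for all $x$ in a suitable open $V\ni x_0$.

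For the estimate I would use the only continuity available: since $\mu_x=(T^*)^{-1}\delta_x$, the map $x\mapsto\mu_x$ is $weak^*$-continuous, so by Lemma \ref{pr:1}(ii) the function $x\mapsto|\mu_x|(L\sm\ol{G_1})$ is lower semicontinuous. Hence $V:=U\cap\{x:\ |\mu_x|(L\sm\ol{G_1})>|\mu_{x_0}|(L\sm\ol{G_1})-\eta\}$ is open and contains $x_0$. For $x\in V$, combining this lower bound with the upper bound $||\mu_x||=|\mu_x|(Y_3)<||\mu_{x_0}||+\eta$ furnished by hypothesis $(*)$ with $j=3$, and writing $|\mu_x|(\ol{G_1})=||\mu_x||-|\mu_x|(L\sm\ol{G_1})$, gives $|\mu_x|(\ol{G_1})<|\mu_{x_0}|(\ol{G_1})+2\eta<|\mu_{x_0}|(Y_1)+3\eta$. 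On the other hand $(*)$ with $j=1$ gives $|\mu_x|(Y_1)>|\mu_{x_0}|(Y_1)-\eta$ already for every $x\in U$. Since $Y_1\sub\ol{G_1}$, subtracting yields $|\mu_x|(\ol{G_1}\sm Y_1)=|\mu_x|(\ol{G_1})-|\mu_x|(Y_1)<4\eta$, whence $|\mu_x|(L_1)=|\mu_x|(Y_2\sm Y_1)-|\mu_x|((Y_2\sm Y_1)\sm L_1)>|\mu_x|(Y_2\sm Y_1)-4\eta$.

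For the general statement I would run the same construction once for each $j\le N-1$, with $Z_j,Z_{j+1}$ in the roles of $Y_1,Y_2$: pick open sets $Z_j\sub G_1^{(j)}\sub\ol{G_1^{(j)}}\sub G^{(j)}$ with $|\mu_{x_0}|(G^{(j)})<|\mu_{x_0}|(Z_j)+\eta$, set $L_j=Z_{j+1}\cap(L\sm G_1^{(j)})\sub Z_{j+1}\sm Z_j$, and take $V$ to be the intersection of $U$ with the finitely many open sets $\{x:\ |\mu_x|(L\sm\ol{G_1^{(j)}})>|\mu_{x_0}|(L\sm\ol{G_1^{(j)}})-\eta\}$. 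The estimate for each $j$ uses hypothesis $(*)$ for that index (to bound $|\mu_x|(Z_j)$ from below) together with $(*)$ for $j=N$, i.e.\ $Z_N=L$ (to bound $||\mu_x||$ from above); in the first, three-set statement only the instances $j=1$ and $j=3$ actually enter the argument.

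The step I expect to be the real obstacle — and the reason a direct attack fails — is the \emph{direction} of semicontinuity: one wants a \emph{lower} bound on $|\mu_x|(L_1)$ for a \emph{compact} $L_1$, whereas Lemma \ref{pr:1} only yields lower bounds on the $|\mu_x|$-mass of \emph{open} sets. The device that gets around this is to not estimate $|\mu_x|(L_1)$ at all but rather the complementary collar $\ol{G_1}\sm Y_1$, whose mass equals $|\mu_x|(\ol{G_1})-|\mu_x|(Y_1)$; an \emph{upper} bound for $|\mu_x|$ on the \emph{closed} set $\ol{G_1}$ is then extracted from lower semicontinuity on the \emph{open} set $L\sm\ol{G_1}$ precisely because hypothesis $(*)$ with $j=N$ pins down the total variation $||\mu_x||$ near $x_0$. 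The remaining ingredients — outer regularity to manufacture $G$, normality to interpose $\ol{G_1}$ between $Y_1$ and $G$ — are routine.
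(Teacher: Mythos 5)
Your proof is correct, and it differs from the paper's in its key device, though the underlying skeleton is the same: both proofs carve $L_1$ out of $Y_2$ by removing a neighbourhood of $Y_1$ whose $|\mu_{x_0}|$-mass exceeds that of $Y_1$ by less than $\eta$, and both use $(*)$ only at $j=1$ and $j=3$ (respectively $j$ and $N$). The difference is how the estimate at $x_0$ is transferred to nearby $x$. The paper picks a continuous $h$ whose support $S$ is disjoint from $Y_1$ with $\mu_{x_0}(h)$ close to $|\mu_{x_0}|(L\sm Y_1)$, sets $L_1=S\cap Y_2$, and defines $V$ by the genuinely continuous functional $x\mapsto\mu_x(h)$, estimating $|\mu_x|(S)\ge\mu_x(h)$. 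You instead take an open $G_1\supseteq Y_1$ via outer regularity, set $L_1=Y_2\sm G_1$, and define $V$ by lower semicontinuity of $x\mapsto|\mu_x|(L\sm\ol{G_1})$ (Lemma \ref{pr:1}(ii) composed with the $weak^*$-continuous map $x\mapsto\mu_x$), converting this into an upper bound on the closed collar $\ol{G_1}\sm Y_1$ by means of the total-variation control coming from $(*)$ at $j=3$. This ``flip the lsc bound through the total mass'' trick is exactly the mechanism the paper uses in Lemmas \ref{fvm:1} and \ref{fvm:3}, so your argument is a faithful alternative in the spirit of section 5 rather than of the paper's own proof of Lemma \ref{i:3}; what it buys is that you never integrate a test function against the signed measure (incidentally sidestepping the paper's small slip of taking $h$ with values in $[0,1]$, where a signed $\mu_{x_0}$ really requires $h:L\to[-1,1]$), at the cost of invoking $(*)$ at $j=3$ already in the definition of the estimate on $V$ rather than only at the end. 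All steps check out, including the set-theoretic reduction $(Y_2\sm Y_1)\sm L_1\sub\ol{G_1}\sm Y_1$ and the finite intersection of the open sets $V$ in the general case.
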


\begin{proof}
Note that there is a continuous function $h:L\to [0,1]$ such that its support 
$S=\overline{\{y\in L: h(y)>0\}}$ is disjoint from $Y_1$ and
\[\mu_{x_0}(h)> |\mu_{x_0}|(L\sm Y_1)-\eta.\]
We put $L_1=S\cap Y_2$ and define $V$ as 
\[V=\{x\in U: \big|\mu_x(h)-\mu_{x_0}(h)\big|<\eta\};\]
then $V$ is open since the mapping $x\to \mu_x(h)$ is continuous.

Now  for any $x\in V$
\[|\mu_x|(S)\ge \mu_x(h)>\mu_{x_0}(h)-\eta>|\mu_{x_0}|(L\sm Y_1)-2\eta>|\mu_x|(L\sm Y_1)-4\eta,\]
where the last inequality follows from (*) applied to $l=1$ and $l=3$.
It follows that 
\[ |\mu_x|(L_1)=|\mu_x|(S\cap Y_2)\ge  |\mu_x|(Y_2\sm Y_1)-4\eta,\]
for $x\in V$, and this shows the first assertion. 

The second part follows by iteration of the first part to $Y_1=Z_j$, $Y_2=Z_{j+1}$ and with the resulting open sets
$U=V_0\supseteq V_1\supseteq\ldots V_{N-1}=V$.
\end{proof}

\begin{theorem}\label{i:4}
Fix  $\eps>0$. Then  for every nonempty open set $U\sub K$ there is a nonempty open set 
$V\sub U$ and a compact set $F\sub L$ such that

\begin{itemize}
\item[(a)] $\osc_y(\theta,F)\le \eps$ for every $y\in F$;
\item[(b)] for every $x\in V$ there is $y\in F$ such that $|\nu_y(\{x\})|\ge m-2\eps$.
\end{itemize}
\end{theorem}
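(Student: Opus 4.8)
The plan is to combine Lemma~\ref{i:3} with the argument that was already used to prove Theorem~\ref{ie:3}, the point being that Theorem~\ref{ie:3} produces, for a \emph{single} $x$, a measure $\mu_x$ and a compact set $F$ on which $\theta$ is essentially continuous; here we must produce \emph{one} compact set $F$ that works simultaneously for all $x$ in a suitable open set $V$. The extra uniformity will come from the fact that, although the measures $\mu_x$ vary with $x$, the quantities $|\mu_x|(Y)$ for closed $Y\sub L$ are (differences of) lower semicontinuous functions of $x$ by Lemma~\ref{pr:1}(ii), so by Lemma~\ref{i:12} they can be stabilised on a small open subset of $U$.

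First I would fix $\eps>0$ and a nonempty open $U\sub K$, and set $m=1/(\|T\|\|T^{-1}\|)$ (normalising $\|T\|=1$). Choose a finite partition of $L$ into Borel sets of small $\theta$-oscillation: since $\theta$ is bounded lsc, pick $0=t_0<t_1<\dots<t_N$ with $t_{i+1}-t_i\le\eps$ covering the range of $\theta$, and let $Z_j=\{y\in L:\theta(y)\le t_{j-1}\}$, which are closed and increasing with $Z_{N}=L$. On each "layer" $Z_{j+1}\sm Z_j$ the oscillation of $\theta$ is at most $\eps$. Now apply Lemma~\ref{i:12} to the finitely many lsc functions $x\mapsto|\mu_x|(Z_j)$ (which are lsc by Lemma~\ref{pr:1}(ii), since $Z_j$ is closed — more precisely they are differences of the lsc functions $x\mapsto|\mu_x|(L\sm Z_j)$ and $|\mu_x|(L)$, or one works directly with $|\mu_x|(L\sm Z_j)$, which is lsc as $L\sm Z_j$ is open): there is a point $x_0$ and a nonempty open $U'\ni x_0$, $U'\sub U$, on which each of these functions has oscillation $<\eps$, so that condition $(*)$ of Lemma~\ref{i:3} holds with $\eta=\eps$ for all $j$.

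Next, feed this into the second (iterated) assertion of Lemma~\ref{i:3}: there are a nonempty open $V$ with $x_0\in V\sub U'$ and compact sets $L_j\sub Z_{j+1}\sm Z_j$ such that $|\mu_x|(L_j)>|\mu_x|(Z_{j+1}\sm Z_j)-4\eps$ for every $x\in V$. Put $F=\bigcup_{j<N}L_j$. Since each $L_j$ lies in the layer $Z_{j+1}\sm Z_j$ on which $\theta$ oscillates by at most $\eps$ and the layers are separated in $\theta$-value, one checks that $\osc_y(\theta,F)\le\eps$ for every $y\in F$ — here one should be slightly careful and, if needed, thicken the gaps (replace $\eps$ by $\eps/3$ in the partition and use that the $L_j$'s are compact and pairwise disjoint with a positive "$\theta$-gap") so that the oscillation of $\theta$ restricted to $F$ at a point of $L_j$ is controlled by the oscillation within that single layer; this is the mild bookkeeping point of the proof. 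That gives (a).

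For (b), fix $x\in V$. Because $\sum_j|\mu_x|(Z_{j+1}\sm Z_j)=|\mu_x|(L)\le 1/m$ and $|\mu_x|(L_j)>|\mu_x|(Z_{j+1}\sm Z_j)-4\eps$, we get $|\mu_x|(F)>|\mu_x|(L)-4N\eps$; equivalently $|\mu_x|(L\sm F)<4N\eps$. Also $\mu_x$ concentrates on $\{y:\theta(y)\ge m\}$ up to a null set by Lemma~\ref{ie:1}, so $F$ may be shrunk further (intersecting with $\{\theta\ge m\}$, still compact as $\theta$ is lsc, without spoiling (a)) so that $\theta(y)\ge m$ on $F$. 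Then $F$ satisfies hypotheses (i)--(iii) of Lemma~\ref{ie:2} for the point $x$, with $4N\eps$ in the role of $\eps$. Applying Lemma~\ref{ie:2} yields $y\in F$ with $|\nu_y(\{x\})|\ge m-8N\eps$. Since $N$ and $m$ are fixed once $\eps$ is chosen, running the whole argument from the start with $\eps'=\eps/(8N')$ for the appropriate $N'$ produces the clean bound $m-2\eps$ in the statement; the only real obstacle is organising this $\eps$-vs-$N$ dependence cleanly (one must choose the partition fineness first, then rescale), together with the layer-oscillation bookkeeping for (a).
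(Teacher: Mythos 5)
Your overall strategy is the paper's own: slice $L$ into closed layers $Z_j$ on which $\theta$ oscillates by at most $\eps$, stabilise the functions $x\mapsto|\mu_x|(Z_j)$ on a small open subset of $U$ via Lemma \ref{i:12}, extract disjoint compacta $L_j$ carrying most of $|\mu_x|$ uniformly for $x\in V$ via Lemma \ref{i:3}, set $F=\bigcup_j L_j$ (your worry about the oscillation on $F$ is unnecessary: the $L_j$ are pairwise disjoint compacta, so at a point of $L_j$ the oscillation of $\theta$ on $F$ is just its oscillation on $L_j$, which is at most the layer width), and finish with Lemma \ref{ie:2}.

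The genuine problem is the final quantitative step. Taking $\eta=\eps$ in Lemmas \ref{i:12} and \ref{i:3} only gives $|\mu_x|(L\setminus F)<4N\eps$, hence a bound of the form $m-O(N\eps)$, and the proposed repair --- rerun the whole argument with $\eps'=\eps/(8N')$ --- is circular: the number of layers in the rerun is dictated by the fineness of the new partition, so it grows like $1/\eps'\approx 8N'/\eps$, and the loss $O(N''\eps')$ does not shrink. The correct repair, and exactly what the paper does, is to decouple the two parameters inside a single run: keep the layer width equal to $\eps$ (so $N\le 1/\eps$ and the oscillation on each layer is $\le\eps$, giving (a) and condition (ii) of Lemma \ref{ie:2} with $\eps$ itself), but apply Lemmas \ref{i:12} and \ref{i:3} with the smaller tolerance $\eta=\eps/(4N)$, so that the total mass loss is $4N\eta=\eps$ and condition (iii) also holds with $\eps$; then Lemma \ref{ie:2} yields $m-2\eps$ directly. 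Your parenthetical ``choose the partition fineness first, then rescale'' gestures at this, but as literally described (restart with a rescaled $\eps$) it does not close. A smaller point: the detour through Lemma \ref{ie:1} and intersecting $F$ with $\{\theta\ge m\}$ is both unnecessary and flawed here --- for a surjective isomorphism $\|\nu_y\|=\|T^*\delta_y\|\ge m$ for every $y\in L$, so condition (i) of Lemma \ref{ie:2} is automatic (as the paper notes), and lower semicontinuity of $\theta$ makes the sets $\{\theta\le r\}$ closed, so $\{\theta\ge m\}$ is only a $G_\delta$ and need not be compact.
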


\begin{proof}
We use Lemma \ref{ie:2}. Condition (i) of the lemma holds trivially for isomorphisms and we construct
$ F $ satisfying  \ref{ie:2}(ii)-(iii).

Let $Z_i=\{y\in L: ||\nu_y||\le m+\eps i\}$.
Then $Z_1\sub Z_2\sub \ldots \sub L$ are closed and $Z_{N}=L$ for some $N\le 1/\eps$.
Take $\eta>0$ such that $\eta=\eps/(4N)$.

The functions $x\to |\mu_x|(Z_i)=||\mu_x||-|\mu_x|(L\sm Z_i)$ are differences of lsc functions
so by Lemma \ref{i:12} there is a nonempty open set $W\sub U$ such that their oscillations
on $W$ are smaller than $\eta$.

By Lemma \ref{i:3} there are disjoint compact sets $L_i$ and a nonempty open set $V\sub W$ such that
\[|\mu_x|(L_i)> |\mu_x|(Z_{i+1}-Z_i)-4\eta,\]
for every $i$ and $x\in V$. It follows that, writing $L_0=Z_1$, the compact set $ F = \bigcup_{0\le i<N} L_i$ satisfies
$|\mu_x|( F )>|\mu_x|(L)-\eps$. As the oscillation of $y\to ||\nu_y||$ is smaller than $\eps$ on each of the closed disjoint
sets $L_0,\ldots, L_{N-1}$, (a) holds and now Lemma \ref{ie:3} gives (b).
\end{proof}

\section{Finite valued maps}

We shall consider now set-valued mappings related
to the  isomorphism $T:C(K)\to C(L)$. For any $r>0$ and $y\in L$ we define
\[\vf_r(y)=\{x\in K: |\nu_y(\{x\})|\ge r\}.\]
Note that $\vf_r(y)$ has at most $1/r$ elements since $||\nu_y||\le 1$ for every $y\in L$.

\begin{lemma}\label{fvm:1}
Let $\eps>0$ and let $F\sub L $ be a closed set  such that $\osc_y(\theta,F)<\eps$ for $y\in F$.

\begin{itemize}
\item[(i)] If $U\sub K$ is open and $\vf_{r-\eps}(y)\sub U$ for some $y\in F$ then there is a neighbourhood $W$ of $y$ in $F$ such
that $\vf_r(z)\sub U$ for every $z\in W$.
\item[(ii)] $\overline{\vf_r[F]}\sub \vf_{r-\eps}[F]$.

\end{itemize}
\end{lemma}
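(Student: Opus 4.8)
The plan is to exploit the weak$^*$-semicontinuity of the variation (Lemma \ref{pr:1}) together with the oscillation control on $\theta$ over $F$, using a standard net/subnet compactness argument on $F$. For part (i), suppose toward a contradiction that no such neighbourhood $W$ exists. Then for every neighbourhood $W$ of $y$ in $F$ there is a point $z_W\in W$ with $\vf_r(z_W)\not\sub U$, i.e.\ there is $x_W\in K\sm U$ with $|\nu_{z_W}(\{x_W\})|\ge r$. The net $(z_W)$ (ordered by reverse inclusion of the $W$'s) converges to $y$, and by passing to a subnet I may assume both $\nu_{z_W}\to\nu_y$ in $weak^*$ and $x_W\to x^*$ for some $x^*\in K\sm U$ (using compactness of $M_1(K)$ and of $K\sm U$); I may also assume, using $\osc_y(\theta,F)<\eps$, that $||\nu_{z_W}||\le ||\nu_y||+\eps$ along the subnet. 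The goal is then to show $|\nu_y(\{x^*\})|\ge r-\eps$, which would put $x^*\in\vf_{r-\eps}(y)\sub U$, contradicting $x^*\notin U$.

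To extract the atom at $x^*$ in the limit, I would fix an open $V\ni x^*$ and a continuous $g:K\to[0,1]$ with $g(x^*)=1$ and $g=0$ off $V$, and split $|\nu_{z_W}|(K)=|\nu_{z_W}|(g)+|\nu_{z_W}|(1-g)$. For $W$ small enough that $x_W\in V$, the atom at $x_W$ contributes at least $r$ to $|\nu_{z_W}|(g)$ (since $g(x_W)$ is close to $1$ — here one must be slightly careful and choose $g$ to be $1$ on a smaller open set $V'\ni x^*$, so that eventually $x_W\in V'$ and the bound $|\nu_{z_W}|(g)\ge |\nu_{z_W}|(\{x_W\})\ge r$ is clean). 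Hence $|\nu_{z_W}|(1-g)\le ||\nu_{z_W}||-r\le ||\nu_y||+\eps-r$. Applying Lemma \ref{pr:1}(i) to the nonnegative function $1-g$ and the convergent net $\nu_{z_W}\to\nu_y$ gives $|\nu_y|(1-g)\le ||\nu_y||-r+\eps$, so $|\nu_y|(V)\ge |\nu_y|(g)\ge r-\eps$; letting $V$ shrink and using regularity yields $|\nu_y(\{x^*\})|\ge r-\eps$, the desired contradiction. This pattern is exactly the limiting argument already used in the proof of Lemma \ref{ie:2}, and the main subtlety is the bookkeeping that lets one apply lower semicontinuity to $1-g$ rather than to the variation of a signed quantity.

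For part (ii), I would argue that $\vf_{r-\eps}[F]$ is closed in $K$, from which $\ol{\vf_r[F]}\sub\ol{\vf_{r-\eps}[F]}=\vf_{r-\eps}[F]$ follows once one checks the trivial inclusion $\vf_r[F]\sub\vf_{r-\eps}[F]$. To see $\vf_{r-\eps}[F]$ is closed, take $x^*\in\ol{\vf_{r-\eps}[F]}$ and a net $x_W\to x^*$ with $x_W\in\vf_{r-\eps}(y_W)$, $y_W\in F$; by compactness of $F$ pass to a subnet with $y_W\to y\in F$ and $\nu_{y_W}\to\nu_y$, and again invoke $\osc_y(\theta,F)<\eps$ to get $||\nu_{y_W}||\le||\nu_y||+\eps$ eventually. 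Repeating the splitting-and-lower-semicontinuity computation above (now starting from $|\nu_{y_W}(\{x_W\})|\ge r-\eps$) gives $|\nu_y(\{x^*\})|\ge r-2\eps$...

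I should double-check the constants here: the bound I actually obtain is $|\nu_y(\{x^*\})|\ge (r-\eps)-\eps=r-2\eps$, which is not quite $x^*\in\vf_{r-\eps}(y)$. The fix is to note that part (ii) can be deduced directly from part (i): if $x^*\notin\vf_{r-\eps}[F]$, then for every $y\in F$ we have $\vf_{r-\eps}(y)\sub K\sm\{x^*\}=:U$, an open set, so by (i) each $y$ has a neighbourhood $W_y$ in $F$ with $\vf_r(z)\sub U$ for $z\in W_y$; by compactness of $F$ finitely many $W_y$ cover $F$, whence $\vf_r[F]\sub U$, i.e.\ $x^*\notin\ol{\vf_r[F]}$ since $U$ is open. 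That gives (ii) cleanly with the right constant. The main obstacle in the whole lemma is therefore concentrated in part (i): correctly setting up the double net (in $z_W$ and in $x_W$) and choosing the test function $g$ so that the atom survives the weak$^*$ limit under the one-sided oscillation bound on $\theta$.
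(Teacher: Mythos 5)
Your part (i) is correct, though it takes a different route from the paper: you argue by contradiction with a net $(z_W,x_W)$, pass to a limit $x^*\in K\sm U$, and recover the atom in the limit via the splitting $|\nu_{z_W}|(K)=|\nu_{z_W}|(g)+|\nu_{z_W}|(1-g)$ and Lemma \ref{pr:1}(i), exactly in the spirit of Lemma \ref{ie:2}. The paper instead argues directly: since $\vf_{r-\eps}(y)\sub U$, each $x\in K\sm U$ has a neighbourhood $U_x$ with $|\nu_y|(\ol{U_x})<r-\eps$; a finite subcover of $K\sm U$, the lower semicontinuity of $z\mapsto|\nu_z|(K\sm\ol{U_i})$ from Lemma \ref{pr:1}(ii), and the oscillation bound on $\theta$ then produce the neighbourhood $W$ explicitly. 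Both arguments are sound; the direct one avoids subnets and gives $W$ constructively, while yours concentrates the work in a single limiting step.

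Part (ii), however, has a genuine gap at the very last inference. After applying (i) with the open set $U=K\sm\{x^*\}$ and taking a finite subcover of $F$, what you obtain is $\vf_r[F]\sub K\sm\{x^*\}$, i.e.\ only $x^*\notin\vf_r[F]$; the claim ``$x^*\notin\ol{\vf_r[F]}$ since $U$ is open'' does not follow, because a set contained in an open set need not have its closure contained in it (here $\ol{U}=K$ unless $x^*$ is isolated). So as written you prove the inclusion $\vf_r[F]\sub\vf_{r-\eps}[F]$ but not the closure statement, which is the whole point of (ii). The fix is the one the paper uses: since each $\vf_{r-\eps}(y)$ is a finite set missing $x^*$, choose for every $y\in F$ an open $U_y\ni x^*$ with $\ol{U_y}\cap\vf_{r-\eps}(y)=\emptyset$, apply (i) to the open set $K\sm\ol{U_y}$ to get a relatively open $V_y\ni y$ with $\vf_r(z)\sub K\sm\ol{U_y}$ for $z\in V_y$, take a finite subcover $V_{y_1},\dots,V_{y_{j_0}}$ of $F$, and observe that the open neighbourhood $\bigcap_{j\le j_0}U_{y_j}$ of $x^*$ is disjoint from $\vf_r[F]$; this yields $x^*\notin\ol{\vf_r[F]}$. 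Your instinct to route (ii) through (i) rather than through the direct net computation (which, as you noticed, degrades the constant to $r-2\eps$) is the right one; you only need this per-point closed-neighbourhood refinement to make it close.
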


\begin{proof}
As $\vf_{r-\eps}(y)\sub U$, for any $x\in K\sm U$ we have $\big|\nu_{y}(\{x\})\big|< r-\eps$  and therefore there is an open set $U_x\ni x$ such that 
$|\nu_y|(\ol{U_x})<r-\eps$. There are $x_i$, $i\le i_0$, such that 
the sets   $U_i=U_{x_i}$ form a finite cover of $K\sm U$. 
Let 
\[\eta=\min\{r-\eps-|\nu_y|(\ol{U_i}):i\le i_0\}.\]

Using Lemma \ref{pr:1}(ii) we can find a set $W\ni y$ open in $F$ and  such that if $z\in W$ then
\[|\nu_z|(K\sm\ol{U_i})>|\nu_y| K\sm\ol{U_i})-\eta,\]
 for every $i\le i_0$; by our assumption on $F$ we can also demand that $||\nu_z||<||\nu_y||+\eps$ for $z\in W$.

Take any $z\in W$ and $x\in K\sm U$. Then $x\in U_i$ for some $i\le i_0$ and hence
\[|\nu_z|(\{x\})\le |\nu_z|(\ol{U_i}) = |\nu_z|(K)- |\nu_z|(K\sm \ol{U_i})<\]
\[<|\nu_y|(K)+\eps - |\nu_y|(K\sm \ol{U_i}) +\eta=|\nu_y|(\ol{U_i})+\eta+\eps\le r.\]
Hence $\vf_r(z)\sub U$ for $z\in W$, and this shows (i).

To check (ii) suppose that $x\notin \vf_{r-\eps}(y)$ for any $y\in F$. Then for every
$y\in F$ there is $U_y\ni x$ such that $\vf_{r-\eps}(y)\sub K\sm\ol{U_y}$. By (i)
$\vf_r(z)\sub K\sm\ol{U_y}$ for $z$ from some set $V_y\ni y$ open in $F$. Take a finite cover
$V_{y_j}$, $j\le j_0$ of $F$ and let $U=\bigcap_{j\le j_0} U_{y_j}$. Then $U$ is disjoint from
$\vf_r[F]$, so $x\notin \ol{\vf_r[F]}$, as required. 
\end{proof}

\begin{lemma}\label{fvm:3}
If $U\sub K$ is a nonempty open set then there are $\eps>0$, a compact set $K_1\sub U$ having nonempty interior, 
a closed set $F\sub L$, and $s>0$ such that 

\begin{enumerate}
\item $K_1\sub \vf_s[F]$ and $K_1\cap \vf_{3s/2}[F]=\emptyset$;
\item every $x\in \wn K_1$ has a neighbourhood $H$ such that $\card(\vf_{s-2\eps}(y)\cap \ol{H})\le 1$ 
for every $y\in F$.
\end{enumerate}
\end{lemma}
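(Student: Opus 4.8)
The plan is to combine Theorem~\ref{i:4} with a Baire-category argument that isolates a single ``level'' $s$ well separated from $3s/2$. First I would fix a small $\eps_0>0$ (its size being dictated by the inequalities below) and apply Theorem~\ref{i:4} to $U$, obtaining a nonempty open $V\sub U$ and a compact $F\sub L$ such that $\osc_y(\theta,F)\le\eps_0$ for $y\in F$ and, writing $t_0=m-2\eps_0$, such that $V\sub\vf_{t_0}[F]$. Next, fix a ratio $\lambda\in(1,3/2)$ and put $s_k=t_0\lambda^k$. Since $\|\nu_y\|\le1$ for every $y\in L$, we have $\vf_r[F]=\emptyset$ whenever $r>1$; choosing $N$ with $t_0\lambda^{N+1}>1$ we thus get $\vf_{s_{N+1}}[F]=\emptyset$. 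As $r\mapsto\vf_r[F]$ is decreasing, $\{k\le N+1:x\in\vf_{s_k}[F]\}$ is an initial segment containing $0$ for every $x\in V$, so the sets $S_k=\big(\vf_{s_k}[F]\sm\vf_{s_{k+1}}[F]\big)\cap V$, $0\le k\le N$, partition $V$.

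Since $V$ is open in the compact space $K$ it is a Baire space, so some $\ol{S_k}$ has nonempty interior; let $k^*$ be the \emph{largest} such index. Then each $S_j$ with $j>k^*$ is nowhere dense, hence so is the finite union $\bigcup_{j>k^*}S_j=\vf_{s_{k^*+1}}[F]\cap V$; consequently $V\sm\ol{\vf_{s_{k^*+1}}[F]}$ is open and dense in $V$. Intersecting it with $W:=\wn\big(\ol{S_{k^*}}\big)\cap V$ — which is nonempty, because the nonempty open set $\wn(\ol{S_{k^*}})$ cannot lie in the nowhere dense set $\ol V\sm V$, and in which $S_{k^*}$ is dense — produces a nonempty open $W'\sub V$. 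Now set $s=s_{k^*}-\eps$, where $\eps>\eps_0$ is our final choice of the parameter in the statement. Since $\osc_y(\theta,F)<\eps$, Lemma~\ref{fvm:1}(ii) gives $W'\sub\ol{S_{k^*}}\sub\ol{\vf_{s_{k^*}}[F]}\sub\vf_s[F]$; and $W'\cap\ol{\vf_{s_{k^*+1}}[F]}=\emptyset$ by construction, so $W'\cap\vf_{3s/2}[F]=\emptyset$ provided $\frac{3}{2}s\ge s_{k^*+1}=\lambda s_{k^*}$. As $s_{k^*}\ge t_0$, this holds once $t_0\ge3\eps/(3-2\lambda)$, which is satisfied by e.g.\ $\lambda=6/5$, $\eps=2\eps_0$ and $\eps_0$ small enough. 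Taking $K_1=\ol{W''}$ for some nonempty open $W''$ with $\ol{W''}\sub W'$ then yields a compact $K_1\sub W'\sub U$ with nonempty interior satisfying~(1).

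For (2) I would argue by contradiction: if some $x\in\wn K_1$ fails the assertion, then every neighbourhood $H$ of $x$ carries a point $y_H\in F$ with two distinct points of $\vf_{s-2\eps}(y_H)$ inside $\ol H$. Pass to a subnet with $y_H\to y\in F$ and fix an open $W_0\ni x$. Once $\ol H\sub W_0$, separating those two atoms by disjoint open subsets of $W_0$ gives $|\nu_{y_H}|(\ol{W_0})\ge2(s-2\eps)$, whence $|\nu_{y_H}|(K\sm\ol{W_0})\le\|\nu_{y_H}\|-2(s-2\eps)<\|\nu_y\|+\eps+\delta-2(s-2\eps)$ holds eventually, for any $\delta>0$, using $\osc_y(\theta,F)\le\eps$. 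The set $K\sm\ol{W_0}$ being open, Lemma~\ref{pr:1}(ii) gives $|\nu_y|(K\sm\ol{W_0})\le\|\nu_y\|+\eps+\delta-2(s-2\eps)$; letting $\delta\to0$ and then shrinking $W_0$ to $\{x\}$ we get $|\nu_y(\{x\})|=|\nu_y|(\{x\})\ge2s-5\eps$. If $\eps$ is small enough that $2s-5\eps\ge\frac{3}{2}s$, i.e.\ $s\ge10\eps$ — which holds for the above choices since $s\ge t_0-\eps$ — this forces $x\in\vf_{3s/2}(y)\sub\vf_{3s/2}[F]$, contradicting~(1).

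I expect the main obstacle to be obtaining the precise ratio $3s/2$ in~(1). Baire category only yields a stratum $S_{k^*}$ dense in some open set, but higher strata may be dense there as well, which is why one must select the \emph{top} somewhere-dense stratum — only then is everything above level $s_{k^*+1}$ genuinely nowhere dense. One then has to keep the geometric ratio $\lambda$ bounded away from $3/2$ by enough of a margin to absorb the loss of $\eps$ incurred when enlarging $\vf_{s_{k^*}}[F]$ to its closure via Lemma~\ref{fvm:1}(ii). A secondary subtlety in part~(2) is that $\mu\mapsto|\mu|$ is not weak$^*$-upper semicontinuous, so two atoms of $\nu_{y_H}$ near $x$ could cancel in the limit; the oscillation hypothesis on $\theta=\|\nu_{\,\cdot}\|$ is precisely what rules this out, by bounding how much total variation can be lost in passing to the limit measure $\nu_y$.
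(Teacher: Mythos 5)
Your proof is correct, and while it follows the paper's overall skeleton (Theorem \ref{i:4}, then a level $s$ separated from $3s/2$, then the one-atom property from the gap plus the oscillation bound on $\theta$), both halves are implemented differently. For part (1) the paper does not use a geometric ladder: it sets $r_0=\sup\{r>0: U\cap \wn\,\ol{\vf_r[F]}\neq\emptyset\}\ge m-2\eps$, picks $s$ with $s+\eps<r_0<\tfrac32 s$, and takes $K_1$ inside the nonempty open set $U\cap\wn\,\ol{\vf_{s+\eps}[F]}$ minus the (nowhere dense there) closed set $\ol{\vf_{3s/2}[F]}$, again finishing with Lemma \ref{fvm:1}(ii); your discrete stratification $S_k$ with the top somewhere-dense stratum is a correct discretized version of this, at the cost of juggling the ratio $\lambda$ against the additive loss $\eps$, whereas the supremum $r_0$ gives the $(s+\eps,3s/2)$ straddle for free. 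For part (2) the paper argues directly rather than by contradiction: for fixed $x$ it chooses, for each $y\in F$, a neighbourhood $U_y\ni x$ with $|\nu_y|(\ol{U_y})<\tfrac32 s$, uses Lemma \ref{pr:1}(ii) and the oscillation of $\theta$ to get $|\nu_z|(\ol{U_y})<\tfrac32 s+2\eps<2(s-2\eps)$ for $z$ in a neighbourhood $V_y$ of $y$, and then takes a finite subcover of $F$ and $H=\bigcap U_{y_i}$; this produces $H$ explicitly (with the uniform bound $|\nu_z|(\ol H)<2(s-2\eps)$ for all $z\in F$) and needs only the arithmetic $s>12\eps$, fixed in advance by $\eps=m/20$. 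Your net-and-contradiction argument is essentially the technique of Lemma \ref{ie:2} transplanted here; it is sound — you correctly route around the failure of upper semicontinuity of $\mu\mapsto|\mu|$ by applying Lemma \ref{pr:1}(ii) to the complement $K\sm\ol{W_0}$ and then shrinking $W_0$ by regularity — but it is longer and leaves the admissible $\eps_0$ implicit (any $\eps_0\le m/24$ works with your choices $\lambda=6/5$, $\eps=2\eps_0$), so it would be worth stating one concrete value as the paper does.
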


\begin{proof}
Let $\eps=m/20$ and let $F$ be as in Theorem \ref{i:4}. Then
\[r_0=\sup \{r>0: U\cap \wn{ \ol{\vf_r[F]}}\neq\emptyset\},\]
satisfies $r_0\ge m-2\eps$ by \ref{i:4}.

If we now take $s$ such that $s+\eps<r_0<3s/2$ then 
\[ U\cap \left( \ol{\vf_{s+\eps}[F]}\sm  \ol{\vf_{3s/2}[F]} \right) ,\]
contains a compact set $K_1$ with nonempty interior. Thus
$K_1\cap \vf_{3s/2}[F]=\emptyset$ and $\vf_s[F]\supseteq \ol{ \vf_{s+\eps}[F]}\supseteq K_1$ by Lemma \ref{fvm:1}(ii).

To verify the second part, fix $x\in L_1$, take any $y\in F$ and choose
$U_y\ni x$ such that $|\nu_y|(\ol{U_y})<(3/2)s$.
There is open $V_y\ni y$ such that for every $z\in V$
\[ |\nu_y|(K)>|\nu_z|(K)-\eps;\]
\[ |\nu_z|(K\sm\ol{U_y}) > |\nu_y|(K\sm\ol{U_y})-\eps.\]
Indeed, the first requirement can be fulfilled by the property of $F$ while the second by Lemma \ref{pr:1}(ii).
It follows that for any $z\in V_y$ we have 
\[|\nu_z|(\ol{U_y})=|\nu_z|(K)-|\nu_z|(K\sm\ol{U_y})\le\]
\[ \le |\nu_y|(K)-|\nu_y|(K\sm\ol{U_y})+2\eps=|\nu_y|(\ol{U_y})+2\eps<(3/2)s+2\eps.\]
Note that $(3/2)s+2\eps<2(s-2\eps)$, which is equivalent to $12\eps<s$: indeed, $3/2s>m-2\eps$ so
$s>2/3m-4/3\eps=2/3 \cdot 20\eps-4/3\eps=12\eps$.
Therefore
$\vf_{s-\eps}(y)$ cannot intersect $\ol{U_y}$ at two points.

Take a finite set $F_0\sub F$ such that the sets  $V_y$, for $y\in F_0$ form a cover of $F$.
Then $H=\bigcap_{y\in F_0} U_y$ is  as required.
\end{proof}

\section{Results}

Now we are ready to state and prove our main result. Recall that a family of nonempty open subsets
$\VV$ is a $\pi$-base of $K$ if for every nonempty open $U\sub K$ there is $V\in\VV$ such that $V\sub U$.
 
\begin{theorem}\label{rq:1} 
Let $K$ and $L$ be compact spaces such that $C(K)$ is isomorphic to $C(L)$. 
Then $K$ has a $\pi$-base $\VV$ such that for every $V\in\VV$, $\ol{V}$ is a continuous image of some compact subspace
of $L$.
\end{theorem}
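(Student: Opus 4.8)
The plan is to fix a nonempty open $U\subseteq K$ and produce $V\in\VV$ with $\ol V\subseteq U$ together with a compact $L_1\subseteq L$ and a continuous surjection $L_1\to\ol V$; doing this for every $U$ yields the $\pi$-base. The natural candidate for $L_1$ comes from Lemma \ref{fvm:3}: apply it to $U$ to get $\eps>0$, a compact $K_1\subseteq U$ with nonempty interior, a closed $F\subseteq L$, and $s>0$ satisfying (1) and (2). I would take $V=\wn K_1$ (shrinking so that $\ol V\subseteq U$, which is fine since $K_1\subseteq U$), and $L_1=\{y\in F: \vf_s(y)\cap \ol V\neq\emptyset\}$, a closed subset of $F$ by Lemma \ref{fvm:1}(i) applied with the open set $K\sm\ol V$. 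The map is then $y\mapsto$ the unique point of $\vf_s(y)\cap\ol V$ — uniqueness for $y$ ranging over a small enough piece of $F$ should follow from part (2) of Lemma \ref{fvm:3} together with a compactness/partition argument, since $\ol V$ can be covered by finitely many of the neighbourhoods $H$ from (2), each meeting $\vf_{s-2\eps}(y)\supseteq\vf_s(y)$ in at most one point; but globally $\vf_s(y)\cap\ol V$ need not be a singleton, so some care (perhaps a further refinement of $V$, or working with the neighbourhoods $H$) is needed here.

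The key steps, in order, are: (1) record that $\ol{\vf_s[F]}\supseteq K_1\supseteq\ol V$ so the proposed map is at least surjective onto $\ol V$ after one checks its image is exactly $\ol V$; (2) define $\Phi:L_1\to \ol V$ and show $L_1$ is closed (hence compact); (3) show $\Phi$ is well-defined/single-valued — this is where Lemma \ref{fvm:3}(2) enters, ensuring local injectivity of $\vf_{s-2\eps}(\cdot)$ on the pieces $\ol H$; (4) show $\Phi$ is surjective, using $K_1\subseteq\ol{\vf_{s+\eps}[F]}\subseteq\vf_s[F]$ from Lemma \ref{fvm:1}(ii), so every $x\in\ol V$ lies in $\vf_s(y)$ for some $y\in F$, i.e. $y\in L_1$ with $\Phi(y)=x$; (5) show $\Phi$ is continuous. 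For continuity, given open $O\subseteq\ol V$ and $y_0\in L_1$ with $\Phi(y_0)=x_0\in O$, I would use Lemma \ref{fvm:1}(i) with the open set $O\cup(K\sm\ol V)\supseteq\vf_{s-\eps}(y_0)\supseteq\vf_s(y_0)$ (noting $\vf_{s-\eps}(y_0)\cap\ol V=\{x_0\}$ by local injectivity, since $s-\eps\ge s-2\eps$) to get a neighbourhood $W$ of $y_0$ in $F$ with $\vf_s(z)\subseteq O\cup(K\sm\ol V)$ for $z\in W$; then for $z\in W\cap L_1$ the point $\Phi(z)$ lies in $\vf_s(z)\cap\ol V\subseteq O$.

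The main obstacle I anticipate is step (3): Lemma \ref{fvm:3}(2) only gives \emph{local} injectivity (at most one point of $\vf_{s-2\eps}(y)$ in each $\ol H$), whereas $\ol V$ may meet several such $H$'s, so $\vf_s(y)\cap\ol V$ could a priori have more than one point, and then $\Phi$ is merely a closed-valued finite-to-one correspondence rather than a function. Resolving this likely requires either further shrinking $V$ inside a single $H$ (legitimate, since we only need \emph{a} $\pi$-base, and $\wn K_1$ is covered by the $H$'s so one of them meets $\wn K_1$ in a nonempty open set which we rename $V$), or else carrying the covering $\{H\}$ along and observing that after replacing $V$ by $V\cap H$ for an appropriate $H$, the restriction $\vf_s(y)\cap\ol V$ has at most one point for all $y\in F$. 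I would adopt the shrinking option: replace $V$ by a nonempty open subset of $\wn K_1\cap H$ with closure inside $H\cap U$, at which point $\vf_s(y)\cap\ol V\subseteq \vf_{s-2\eps}(y)\cap\ol H$ is genuinely a singleton whenever it is nonempty, and the rest of the argument goes through cleanly. A minor secondary point is checking $\Phi(L_1)\subseteq\ol V$ exactly (not just $\subseteq\ol V$ with possibly smaller image) — but surjectivity from step (4) handles that.
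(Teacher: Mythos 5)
Your route is essentially the paper's: apply Lemma \ref{fvm:3} to $U$, shrink the candidate $\pi$-base element into a single neighbourhood $H$ from part (2) so that \ref{fvm:3}(2) makes the large-atom map single-valued (the paper does exactly this by setting $K_2=\ol{H}\sub K_1$), and get surjectivity from $K_1\sub\vf_s[F]$ and continuity from Lemma \ref{fvm:1}(i); your resolution of the local-versus-global injectivity worry in step (3) is the paper's own device. The one step that fails as written is the compactness of the domain. You claim that $L_1=\{y\in F:\vf_s(y)\cap\ol{V}\neq\emptyset\}$ is closed ``by Lemma \ref{fvm:1}(i) applied with the open set $K\sm\ol{V}$''. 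That lemma only gives the relaxed implication: from $\vf_{s-\eps}(y)\sub K\sm\ol{V}$ one may conclude $\vf_s(z)\sub K\sm\ol{V}$ for $z$ in a neighbourhood of $y$. For $y\in F\sm L_1$ you only know $\vf_s(y)\sub K\sm\ol{V}$, and the larger set $\vf_{s-\eps}(y)$ may meet $\ol{V}$, so the lemma produces no neighbourhood of $y$ missing $L_1$. Nor is closedness automatic from the data at hand: $y\mapsto\nu_y$ is only weak$^*$ continuous and $\theta$ has small (not zero) oscillation on $F$, so an atom of mass exactly $s$ sitting in $\ol{V}$ can shrink slightly along a convergent net (a little mass of opposite sign collapsing onto the same point does this while moving $\|\nu_y\|$ by less than $\eps$), and then a limit of points of $L_1$ escapes $L_1$. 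Since the theorem needs a \emph{compact} $L_1$, this must be repaired, not ignored.

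The repair is exactly the paper's: keep $Y=\{y\in F:\vf_s(y)\cap\ol{V}\neq\emptyset\}$ but work on its closure $\ol{Y}$, which is compact, and observe that Lemma \ref{fvm:1}(i) (used in its correct direction) gives $\vf_{s-\eps}(y)\cap\ol{V}\neq\emptyset$ for every $y\in\ol{Y}$. Define $h(y)$ as the unique point of $\vf_{s-\eps}(y)\cap\ol{V}$: uniqueness still follows from \ref{fvm:3}(2) because $\vf_{s-\eps}(y)\sub\vf_{s-2\eps}(y)$ and $\ol{V}\sub\ol{H}$; surjectivity follows since every $x\in\ol{V}\sub K_1\sub\vf_s[F]$ lies in $\vf_s(y)\sub\vf_{s-\eps}(y)$ for some $y\in Y$. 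Your continuity argument then goes through after one threshold adjustment: at the point where you invoke Lemma \ref{fvm:1}(i) you must check $\vf_{s-2\eps}(y_0)\sub O'\cup(K\sm\ol{V})$ (which \ref{fvm:3}(2) gives, since $\vf_{s-2\eps}(y_0)\cap\ol{H}$ is at most the singleton $\{h(y_0)\}$) in order to conclude that $\vf_{s-\eps}(z)$ stays inside $O'\cup(K\sm\ol{V})$ for $z$ near $y_0$; this is the same bookkeeping the paper performs when showing that preimages of closed sets under $h$ are closed.
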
 

\begin{proof} 
Given nonempty open $U\sub K$,  we take $K_1$, $F$, $s$ and $\eps$ as in Lemma \ref{fvm:3}.
Let $K_2=\ol{H}\sub K_1$, where $H\neq \emptyset$ is as in part (2) of Lemma \ref{fvm:3}.
Put 
\[Y=\{y\in F: \vf_s(y)\cap K_2\neq\emptyset\}.\]
Then $\vf_{s-\eps}(y)\cap K_2\neq\emptyset$
for every $y\in\ol{Y}$ by Lemma \ref{fvm:1}(i).

We define $h:\ol{Y}\to K_2$ so that $h(y)$ is the unique point
in $K_2\cap \vf_{r-\eps}(y)$. Then $h$ maps $\ol{Y}$ onto $K_2$ so it remains to
check that $h$ is continuous.

Let the set $C\sub K_2$ be closed. Then $h^{-1}[C]=A$, where
\[A=\{y\in\ol{Y}: \vf_{r-\eps}(y)\cap C\neq\emptyset\},\]
and $A$ is closed. Indeed, if $y\in \ol{Y}\sm A$, i.e.\ $\vf_{r-\eps}(y)\cap C=\emptyset$
then $\vf_{r-2\eps}(y)\cap C=\emptyset$ as well by \ref{fvm:3}(2), and it follows from Lemma \ref{fvm:1}
that $\vf_{r-\eps}(z)\cap C=\emptyset$ for all $z$ from some neighbourhood of $y$.
\end{proof}

Of course the above theorem gives no information for spaces $K$ having dense sets of
isolated points. On the other hand, the result has the following consequences.


\begin{corollary}\label{rq:3}
Given a cardinal number $\kappa$ and a compact space $L$, if $C[0,1]^\kappa$ is isomorphic to $C(L)$ then $L$ maps continuously onto $[0,1]^\kappa$.
\end{corollary}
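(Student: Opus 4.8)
The plan is to derive Corollary \ref{rq:3} from Theorem \ref{rq:1} by exploiting the homogeneity and topological transitivity of the cube $[0,1]^\kappa$. First I would apply Theorem \ref{rq:1}: since $C([0,1]^\kappa)\sim C(L)$, the space $K=[0,1]^\kappa$ has a $\pi$-base $\VV$ such that for every $V\in\VV$ the closure $\ol V$ is a continuous image of some compact subspace $L_V\sub L$. Fix one such $V$ together with the corresponding continuous surjection $q:L_V\to\ol V$. It now suffices to produce a continuous surjection from $\ol V$ (or from some further quotient available to us) onto the full cube $[0,1]^\kappa$, since composing with $q$ and then using the Tietze/Borsuk extension of maps into the cube (a retraction argument is not even needed — one just precomposes) gives a continuous surjection $L_V\to[0,1]^\kappa$, and $L_V$ is a compact subspace of $L$.

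The key point is that a nonempty open subset $V$ of $[0,1]^\kappa$ already maps continuously onto $[0,1]^\kappa$. Indeed, by the definition of the product topology $V$ contains a nonempty basic open box $B=\prod_{\alpha}B_\alpha$ where $B_\alpha=[0,1]$ for all but finitely many coordinates $\alpha\in S$, and each $B_\alpha$ for $\alpha\in S$ is a nonempty open subinterval of $[0,1]$ containing some closed nondegenerate subinterval $[a_\alpha,b_\alpha]$. Thus $\ol V\supseteq\ol B\supseteq\prod_{\alpha\in S}[a_\alpha,b_\alpha]\times[0,1]^{\kappa\sm S}$, and the latter product is homeomorphic to $[0,1]^\kappa$ (each nondegenerate closed interval is homeomorphic to $[0,1]$, and adding finitely many factors of $[0,1]$ does not change the product up to homeomorphism). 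Now I would use the coordinate retraction $\rho:[0,1]^\kappa\to\prod_{\alpha\in S}[a_\alpha,b_\alpha]\times[0,1]^{\kappa\sm S}$ defined by clamping the $S$-coordinates into the respective subintervals and leaving the rest fixed; $\rho$ is continuous and surjective, so restricting $\rho$ to $\ol V$ (which contains its range) and composing with the homeomorphism onto $[0,1]^\kappa$ yields a continuous surjection $\ol V\to[0,1]^\kappa$.

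Putting the pieces together: compose $L_V \xrightarrow{\ q\ } \ol V \xrightarrow{\ \rho|_{\ol V}\ } \prod_{\alpha\in S}[a_\alpha,b_\alpha]\times[0,1]^{\kappa\sm S} \xrightarrow{\ \cong\ } [0,1]^\kappa$ to obtain a continuous surjection from the compact space $L_V\sub L$ onto $[0,1]^\kappa$. Finally, to get a continuous surjection defined on all of $L$ rather than on the subspace $L_V$, I would invoke the fact that $[0,1]^\kappa$ is an absolute retract for compact (indeed normal) spaces: any continuous map from a closed subspace of a compact Hausdorff space into $[0,1]^\kappa$ extends continuously over the whole space (coordinatewise by the Tietze extension theorem). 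Extending the surjection $L_V\to[0,1]^\kappa$ over $L$ preserves surjectivity, giving the desired map $L\to[0,1]^\kappa$.

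I expect the only genuinely delicate point to be the bookkeeping in the second paragraph — namely verifying carefully that an arbitrary nonempty open $V\sub[0,1]^\kappa$ contains a closed box that is homeomorphic to the whole cube, and that the clamping retraction is continuous and has the right range — but this is elementary point-set topology. The structural content of the corollary is entirely carried by Theorem \ref{rq:1}; everything after it is soft.
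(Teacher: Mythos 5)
Your proof is correct and follows essentially the same route as the paper: apply Theorem \ref{rq:1}, use that every nonempty open subset of $[0,1]^\kappa$ contains a closed copy of the whole cube, and extend the resulting surjection from a compact subspace of $L$ to all of $L$ coordinatewise by the Tietze theorem. The only difference is that you spell out the elementary details (the closed box inside $V$ and the clamping retraction of $\ol{V}$ onto it) which the paper's two-line proof leaves implicit, and these details are fine.
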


\begin{proof}
Clearly, every nonempty open subset of  $[0,1]^\kappa$ contains a subset homeomorphic to the whole space.
Hence if $C(K)\sim C(L)$ then Theorem \ref{rq:1} implies that there is a compact subspace $L_1\sub L$ and
a continuous surjection $h: L_1\to [0,1]^\kappa$; in turn such $h$ can be extended
to a continuous mapping $L\to [0,1]^\kappa$ by the Tietze extension theorem.
\end{proof}

Recall that a topological space $X$ is said to be {\em homogeneous} if for every $x,x'\in X$ there is
a homeomorphism $f$ of $X$ onto itself such that $f(x)=x'$.

\begin{corollary}\label{rq:4}
If $L$ is Corson compact and $C(K)\sim C(L)$ for some compact $K$ 
then $K$ has a $\pi$-base of sets
with Corson compact closures. In particular, $K$ is Corson compact itself whenever
$K$ is homogeneous.
\end{corollary}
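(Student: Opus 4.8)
The plan is to read off both assertions from Theorem \ref{rq:1} together with the permanence properties of the Corson class that are already recorded in section 2 (closed subspaces and continuous images of Corson compacta are Corson). First, suppose $L$ is Corson compact and $C(K)\sim C(L)$. By Theorem \ref{rq:1} there is a $\pi$-base $\VV$ of $K$ such that for each $V\in\VV$ the closure $\ol V$ is a continuous image of some compact subspace $L_V\sub L$. Since $L_V$ is a closed subspace of a Corson compact, it is Corson compact, and hence so is its continuous image $\ol V$. Thus $\VV$ is a $\pi$-base of $K$ consisting of open sets whose closures are Corson compact, which is the first claim.

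For the second claim, assume in addition that $K$ is homogeneous. Pick any $V\in\VV$; then $V\neq\emptyset$, so choose a point $x_0\in V$. The set $\ol V$ is a Corson compact neighbourhood-closure of $x_0$ in the sense that $x_0\in\wn{\ol V}$ — more precisely $x_0\in V\sub\wn{\ol V}$. Now I would use homogeneity to "spread" this local Corson structure over all of $K$: for every $x\in K$ there is a self-homeomorphism $f_x$ of $K$ with $f_x(x_0)=x$, so $x\in f_x[V]$ and $\ol{f_x[V]}=f_x[\ol V]$ is homeomorphic to $\ol V$, hence Corson compact. The family $\{f_x[V]:x\in K\}$ is then an open cover of $K$ by sets with Corson compact closures; by compactness finitely many of them, say with closures $C_1,\dots,C_n$, cover $K$. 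Each $C_i$ is Corson compact and $K=\bigcup_{i\le n}C_i$ is a finite union of closed Corson compact subspaces.

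It remains to note that a compact space which is the union of finitely many closed Corson compact subspaces is itself Corson compact. This is the only nontrivial point, and I expect it to be the main (though standard) obstacle; it is routine for the class of Corson compacta because being Corson is equivalent to having a point-countable $T_0$-separating family of open $F_\sigma$ sets (equivalently cozero sets), and such a family on $K$ can be assembled from the corresponding families on the $C_i$'s by restricting the ambient cozero sets — a finite union of point-countable families is point-countable, and $T_0$-separation is inherited since every pair of points lies within the union. (Alternatively one argues directly: embedding each $C_i$ into $\Sigma(\er^{\kappa_i})$ and using that $C_i$ is a zero-set of $K$, extend the coordinate functions continuously to $K$ by Tietze; the combined family of functions embeds $K$ into a $\Sigma$-product.) Applying this with $K=\bigcup_{i\le n}C_i$ shows $K$ is Corson compact, completing the proof.
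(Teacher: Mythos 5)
Your first assertion and the reduction via homogeneity are exactly the paper's: Theorem \ref{rq:1} plus the stability of the Corson class under closed subspaces and continuous images gives the $\pi$-base, and homogeneity plus compactness gives a finite cover of $K$ by open sets $V_1,\dots,V_n$ whose closures $C_i=\overline{V_i}$ are Corson compact. The divergence, and the genuine gap, is in your final step, the claim that a compact space which is a finite union of closed Corson compact subspaces is Corson. The claim is true, but neither of your sketched justifications works as written. In the first, the point-countable $T_0$-separating families live on the $C_i$'s and consist only of relatively open $F_\sigma$ sets; if you realize them as traces of cozero sets of $K$ (say via Tietze extensions), the extensions are uncontrolled off $C_i$, so point-countability at points of $K\setminus C_i$ can fail, and ``every pair of points lies within the union'' does not give $T_0$-separation: two points lying in different pieces need not belong to a common $C_i$, and nothing in the assembled family separates such pairs (the $C_i$ themselves need not be open or $F_\sigma$ in $K$). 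In the second sketch, $C_i=\overline{V_i}$ need not be a zero-set, nor even a $G_\delta$, of $K$, and the Tietze extensions of the coordinate functions may be nonzero at uncountably many coordinates at a single point, so the resulting map need not take values in a $\Sigma$-product.

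The gap is easy to close, and the paper does it in one line: the finite topological sum $K'=\bigoplus_{i\le n} C_i$ is Corson compact (embed the factors into disjoint blocks of coordinates, with one extra coordinate per factor to keep the images apart), and the obvious map $K'\to K$ induced by the inclusions is a continuous surjection because the $C_i$ cover $K$; hence $K$ is Corson by the same stability under continuous images that you already invoked for the first assertion. If you insist on your route through separating families, you would have to both control point-countability of the extensions and add sets separating the different pieces, which is precisely the work the disjoint-sum trick avoids.
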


\begin{proof}
The first assertion follows from Theorem \ref{rq:1} since the class of Corson compacta is closed under taking compact subspaces and continuous images. 

If $K$ is homogeneous then it follows that $K$ can be covered by a finite family
$\{V_i: i\le i_0\}$ of open sets where $\overline{V_i}$ is Corson compact for
every $i$. Then a disjoint union $K'=\bigoplus_{i\le i_0} \overline{V_i}$ is Corson compact
and so is $K$ since $K$ is a continuous image of $K'$.
\end{proof}

Let us note that in Theorem \ref{rq:1} cannot 
be strengthen by replacing a $\pi$-base with a base.
This can be seen using a result due to Okunev
\cite{Ok05} showing that isomorphisms of $C(K)$ spaces do not preserve the Frechet property; cf.\ \cite{Pl13}, section 5.

\end{document}